\newtheorem{thm}{Theorem}[section]
\newtheorem{cor}[thm]{Corollary}
\newtheorem{lem}[thm]{Lemma}
\newtheorem{prop}[thm]{Proposition}
\theoremstyle{definition}
\theoremstyle{remark}
\DeclareMathOperator{\lk}{Link}
\DeclareMathOperator{\st}{Star}
\newcommand{\cC }{\mathcal C}
\newcommand{\cN }{\mathcal N}
\newcommand{\cP }{\mathcal P}
\newcommand{\bN}{\mathbb{N}}
\newcommand{\bZ}{\mathbb{Z}}
\def\coloneqq{\mathrel{\mathop\mathchar"303A}\mkern-1.2mu=}
\begin{document}

\title{Intersection of parabolic subgroups in even Artin groups of FC-type}
\author{Yago Antol\'{i}n, Islam Foniqi}

\date{\today}

\maketitle

\begin{abstract}
We show that the intersection of parabolic subgroups of an even finitely generated FC-type Artin group is again a parabolic subgroup.
\end{abstract}


\vspace{0.2cm}\vspace{0.2cm}

\noindent{\bf MSC 2020 classification}: 20F36,20F65.


\section{Introduction}
An {\it  Artin graph} $\Gamma$ is a triple $(V, E, m)$ where $V$ is a set whose elements are called {\it vertices},
$E$ is a set of two-element subsets of $V$ and whose elements are called {\it edges} and  $m\colon E\to \{2,3,4,\dots, \}$ is a function called {\it labelling} of the edges.

Given an Artin graph $\Gamma$,   the corresponding {\it Artin group based on $\Gamma$} (also known as {\it Artin--Tits group}) and denoted by $G_{\Gamma}$ is the group with presentation
$$G_\Gamma \coloneqq \langle\, V \mid \mathrm{prod}(u,v,m({u,v}))=\mathrm{prod}(v,u,m({u,v})) \,\, \forall \{u,v\}\in E\,\rangle,$$
where $\mathrm{prod}(u,v,n)$ denotes the prefix  of length $n$ of the infinite alternating word $uvuvuv\dots$. 

Associated to an Artin graph, we can also construct the {\it Coxeter group based on $\Gamma$} which is the group with presentation 
$$C_\Gamma \coloneqq \langle\, V \mid v^2=1\, \forall v\in V, \, \mathrm{prod}(u,v,m_{u,v})=\mathrm{prod}(v,u,m_{u,v}) \,\, \forall \{u,v\}\in E\,\rangle.$$

An Artin graph $\Gamma$ and the corresponding group $G_\Gamma$ are called of {\it spherical type} if the associated Coxeter group $C_\Gamma$ is finite. 

For $S \subseteq V$, we denote by $G_S$ to the subgroup of $G_\Gamma$ generated by the vertices of $S$. 
Subgroups of this form are called {\it standard parabolic subgroups}, and a theorem of Van der Lek \cite{VdL} shows that $G_S\cong G_\Delta$ where $\Delta$ is the  Artin subgraph of $\Gamma$ induced by $S$.
An Artin graph $\Gamma$ and the corresponding group $G_\Gamma$ are called of {\it FC-type} if every standard parabolic subgroup based on a complete subgraph is of spherical type.

A subgroup $K$ of $G_\Gamma$ is called {\it parabolic} if it is a conjugate of a standard parabolic subgroup.
We say that $K$ is of {\it spherical type} if it is conjugated to a standard parabolic subgroup that is of spherical type.
It was proven by Van der Lek in \cite{VdL} that the class of standard parabolic subgroups is closed under intersection and it is conjectured that the same result holds for the class consisting of all parabolic subgroups.

Let $G_\Gamma$ be an Artin group and $P_1, P_2$ two parabolic subgroups in $G_\Gamma$.
In any of the following cases, $P_1\cap P_2$ is known to be again parabolic:
\begin{enumerate}
\item if $G_\Gamma$ is of spherical type (see \cite{cumplido2020parabolic}),
\item if $G_\Gamma$ is of FC-type and $P_1$ is of spherical type (see \cite{MPV} which generalizes \cite{Morris} where the result was obtained when both $P_1$ and $P_2$ are of spherical type),
\item if $G_\Gamma$ is of {\it large type}, that is $m(\{u,v\})\geq 3$ for all $\{u,v\}\in E$ (see \cite{CMV}),
\item if $G_\Gamma$ is a {\it right-angled Artin group}, that is $m(E) \subseteq \{2\}$ (see \cite{DKR} and \cite{AM} for a generalization to graph products),
\item if $G_\Gamma$ is a {\it (2,2)-free two-dimensional Artin group}, i.e.  $\Gamma$  does not have two consecutive edges labeled by 2 and the geometric dimension of $G_\Gamma$ is two (\cite{Blufstein}),
\item if $G_\Gamma$ is Euclidean of type  $\tilde{A}_n$ or  $\tilde{C}_n$ (\cite{Haettel}).
\end{enumerate} 

We say that an Artin graph $\Gamma = (V,E,m)$ is {\it even} if $m(E)\subseteq 2\bN$. 
The main theorem of this article is:
\begin{thm}\label{thm: intersections}
Let $\Gamma=(V,E,m)$ be an even, finite Artin graph of FC-type.  
The intersection of two parabolic subgroups of $G_\Gamma$ is parabolic.
\end{thm}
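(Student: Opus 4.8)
The plan is to argue by a double induction: an outer induction on $|V|$, and, for a fixed graph, an inner induction on
$n(P_1,P_2):=\min\{\,|Z_1|+|Z_2| \ :\ P_i=a_iG_{Z_i}a_i\inv\ (i=1,2)\,\}$.
When the inductive step below does not apply — that is, when $\Gamma$ is complete — $G_\Gamma$ is of spherical type (being FC-type), and the statement is \cite{cumplido2020parabolic}; this is the base case (the right-angled case \cite{DKR} may also be used to short-circuit parts of the argument). Two structural features of an even, finite, FC-type Artin graph will be used throughout. First, since every defining relation $\mathrm{prod}(u,v,m)=\mathrm{prod}(v,u,m)$ has $m$ even, the assignment $v\mapsto v$ for $v\in X$ and $v\mapsto 1$ for $v\notin X$ sends that relation either to a relation of $G_X$, or, when exactly one of $u,v$ lies in $X$, to the identity $u^{m/2}=u^{m/2}$; hence for every $X\subseteq V$ there is a retraction $\rho_X\colon G_\Gamma\to G_X$, with $\rho_X\circ\rho_Y=\rho_{X\cap Y}$. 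In particular $G_\Gamma^{\mathrm{ab}}\cong\bZ^{|V|}$, so conjugate standard parabolic subgroups have the same rank. Second, if $\Gamma$ is not complete and $w\in V$ is not joined to all other vertices, then $G_\Gamma=G_{V\setminus\{w\}}\ast_{G_{\lk(w)}}G_{\st(w)}$ is a nontrivial amalgamated product whose two factors and whose edge group are even, finite, FC-type Artin groups on strictly fewer vertices (a visual splitting, in the spirit of \cite{VdL}).

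I would first use the retractions to reduce the problem. \emph{(i)}~If a parabolic $P=gG_Sg\inv$ lies in a standard parabolic $G_X$, then applying $\rho_X$ (which is the identity on $G_X$) gives $P=\rho_X(P)=\rho_X(g)\,G_{S\cap X}\,\rho_X(g)\inv$, so $P$ is parabolic in $G_X$; conjugating, $P$ is parabolic in any parabolic subgroup containing it, and combined with the second feature this lets us push parabolics into the (smaller) vertex groups of a splitting. \emph{(ii)}~Given $P_1=g_1G_{S_1}g_1\inv$ and $P_2=g_2G_{S_2}g_2\inv$, conjugate so that $P_1=G_{S_1}$ is standard and put $g=g_1\inv g_2$; applying $\rho_{S_1}$ to an element $x\in G_{S_1}\cap gG_{S_2}g\inv$ and $\rho_{S_2}$ to $g\inv x g\in G_{S_2}$ shows that $P_1\cap P_2$ is conjugate to $G_R\cap hG_Rh\inv$ with $R=S_1\cap S_2$, and, after conjugating by an element of $G_R$, one may assume $h\in\ker\rho_R$. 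Since $2|R|\le|S_1|+|S_2|$, this does not increase $n$, and the case $R=V$ is trivial; so it remains to show that $K:=G_R\cap hG_Rh\inv$ is parabolic for $R\subsetneq V$.

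For the inductive step, suppose first that some $w\in V\setminus R$ is not joined to all other vertices and has $R\not\subseteq\lk(w)$; arranging this fails only in a degenerate case, addressed at the end. Take the splitting $G_\Gamma=G_{V\setminus\{w\}}\ast_{G_{\lk(w)}}G_{\st(w)}$ and its Bass--Serre tree $T$. Since $R\subseteq V\setminus\{w\}$, the group $G_R$ fixes the vertex $v_0$ with $\Stab(v_0)=G_{V\setminus\{w\}}$, and $hG_Rh\inv$ fixes $hv_0$. If $hv_0=v_0$, then $h\in G_{V\setminus\{w\}}$ and $K$ is the intersection of two parabolics of the smaller group $G_{V\setminus\{w\}}$, so the outer induction finishes. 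If $hv_0\neq v_0$, then $K$, being contained in both $G_R$ and $hG_Rh\inv$, fixes $v_0$ and $hv_0$, hence fixes the geodesic between them, hence fixes its first edge $e$ at $v_0$; and $\Stab(e)=\gamma G_{\lk(w)}\gamma\inv$ for some $\gamma\in G_{V\setminus\{w\}}$. Therefore $K=L\cap hG_Rh\inv$, where $L:=G_R\cap\gamma G_{\lk(w)}\gamma\inv$ is an intersection of two parabolics of $G_{V\setminus\{w\}}$ and so is parabolic by the outer induction. Crucially $L\subsetneq G_R$: if $G_R\subseteq\gamma G_{\lk(w)}\gamma\inv$, then for a generator $v\in R\setminus\lk(w)$ one would have $\gamma\inv v\gamma\in G_{\lk(w)}$, and applying $\rho_{\lk(w)}$ to $\gamma\inv v\gamma$ (using $\rho_{\lk(w)}(v)=1$) forces $v=1$, which is absurd. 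Writing $L=aG_{R''}a\inv$ with $a\in G_R$ and $R''\subseteq R$ (by \emph{(i)}), properness gives $R''\subsetneq R$; hence $n(L,hG_Rh\inv)<n(G_R,hG_Rh\inv)$ and the inner induction finishes $K$. Finally, in the degenerate case — every vertex of $V\setminus R$ joined to every vertex of $R$ — one argues along the same lines after either splitting $G_\Gamma$ at a suitable vertex of $R$ (so that $G_R$ becomes spread over the splitting) or passing to the spherical base case; here one also uses that the FC-condition tightly constrains the cross-edges of label $>2$, so that $\ker\rho_R$ is built from elements that centralise $G_R$ up to controlled correction terms.

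The main obstacle is to make the two inductions mesh — every recursive call must strictly decrease $(|V|,n)$ lexicographically — and, within that, to guarantee that the auxiliary parabolic $L$ has \emph{strictly} smaller support than $G_R$; this is exactly where evenness is indispensable, through the retraction onto $\gamma G_{\lk(w)}\gamma\inv$. It relies on the foundational facts, also supplied by the retractions, that parabolicity descends to standard parabolics and that rank is a conjugacy invariant, and on enough control of the visual splittings to dispose of the degenerate configuration. One could run the same argument in the CAT(0) Deligne complex of $G_\Gamma$ (Charney--Davis), using fixed sets of parabolic subgroups in place of the Bass--Serre trees, with the same essential content and the same obstacle.
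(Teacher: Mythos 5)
Your reduction machinery is essentially the paper's Section \ref{sec: retractions}: the retractions $\rho_X$ with $\rho_X\circ\rho_Y=\rho_{X\cap Y}$, the reduction of $P_1\cap P_2$ to $G_R\cap hG_Rh^{-1}$ (Lemmas \ref{lem: intersection_of_standard_parabolic_subgroups} and \ref{lem: intersection_of_two_parabolic_subgroups}), the descent into smaller standard parabolics (Lemma \ref{lem: reducing subgraphs}), and the Bass--Serre argument at a vertex $w\in V\setminus R$ with $R\not\subseteq\lk(w)$, including the properness argument for $L=G_R\cap\gamma G_{\lk(w)}\gamma^{-1}$ via $\rho_{\lk(w)}$ (Lemma \ref{lem: link exterior}). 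All of that is sound. The problem is that what you call the ``degenerate case'' --- every vertex of $V\setminus R$ adjacent to every vertex of $R$ --- is not a boundary case to be disposed of in a sentence; it is where the entire difficulty of the theorem lives, and your two suggested escapes do not work. First, ``passing to the spherical base case'' requires $\Gamma$ to be complete, but the degenerate configuration need not be complete: take $V=\{a,b,x\}$, $R=\{a,b\}$, edges $\{a,x\}$ labelled $4$ and $\{b,x\}$ labelled $2$, and no edge $\{a,b\}$. Here $\st(x)=V$, $G_R$ is free of rank $2$, and $G_\Gamma$ is neither spherical nor right-angled. Second, ``splitting at a suitable vertex of $R$ so that $G_R$ becomes spread over the splitting'' destroys the mechanism you are relying on: once $G_R$ no longer fixes a vertex of the Bass--Serre tree, you cannot intersect vertex stabilizers, and no replacement argument is given. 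Your closing remark that ``$\ker\rho_R$ is built from elements that centralise $G_R$ up to controlled correction terms'' is a hope, not a proof.

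For comparison, the paper isolates exactly this residual case as condition \eqref{eq: base of induction} (after a further inner induction in Proposition \ref{prop: reduction to stars} reduces to $\st(x)=V$ for all $x\in V\setminus A$), and then spends Section \ref{sec: kernels} and Theorem \ref{thm: FC implies class C} resolving it: via Reidemeister--Schreier it identifies $\ker\rho_{\{x\}}$ with an even FC-type Artin group $G_\Delta$ whose index-parabolics are $\Gamma$-parabolic, rewrites $x^sG_Ax^{-s}$ inside $G_\Delta$ using the relation $a_l=\sigma_a^{-q}a_r\sigma_a^{q}$, and in the hardest subcase produces explicit free bases of two subgroups of the free group $\ker\rho_{V_\Delta\setminus\{a_0\}}$ (Claims 1 and 2) to show their intersection is trivial. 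None of this is routine, and none of it is present in your proposal. One smaller point: invoking \cite{cumplido2020parabolic} for complete graphs is legitimate but unnecessary here (a complete even FC-type graph gives a direct product of $\le 2$-generated Artin groups, handled elementarily as in Lemma \ref{lem: intersections on two gen}), and it would import Garside theory that the paper deliberately avoids. As it stands the proposal reproduces the reduction but omits the core of the proof.
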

It is an standard argument to deduce from this theorem that the intersection of arbitrary many parabolic sugroups is again parabolic (see Corollary \ref{cor: arbitrary intersection}).

The class of even FC-type Artin groups includes the class of right-angled Artin groups (raags for short), and  they possess some similar properties.
On one side, we understand well the case when $\Gamma$ is a complete even FC-type Artin graph. 
This implies that $G_\Gamma$ is a direct product of $(\leq 2)$-generated Artin groups (in the case of raags $G_\Gamma$ is free abelian).
On another side, every parabolic subgroup $P$ of an even (FC-type) Artin group $G_\Gamma$ is a {\it retract} i.e. there is a homomorphism $\rho\colon G \to P$ such that $\rho$ restricted to $P$ is the identity.
 
With these two properties, one can decompose even FC-type Artin groups into direct product and amalgamated free products, and in the latter case we use the geometry of the Bass-Serre tree to deduce properties of the intersections of parabolic subgroups.
In fact, we use these two facts in Section \ref{sec: retractions} to reduce the proof of Theorem \ref{thm: intersections} to the case where the parabolic subgroups are conjugate to the same standard parabolic $G_A$  of $G_\Gamma$ and moreover the graph $\Gamma$ satisfies $\st(x)=V$, for all $x\in V-A$.
In this setting, we deduce that the intersection of parabolic subgroups of raags is parabolic and we note that this proof is different from the ones of \cite{DKR} and \cite{AM} which use normal forms.

We remark that in \cite{MPV} the action on the Bass-Serre tree is used in a similar spirit as here.

However, for proving  Theorem  \ref{thm: intersections} we use more properties of even FC-type Artin groups. 
We show that under some circumstances, the kernel of the retractions of standard parabolic subgroups are again even FC-type Artin groups. 
Let $\Gamma$ be an even FC-type Artin graph. 
In \cite{BMP} it was shown that for every $v\in  V$, the canonical retraction $\rho: G_\Gamma\to G_{V-\{v\}}$, has free kernel and they give a description of a free basis. 
With this result they deduce that $G_\Gamma$ is poly-free.
We use these kernels and also the kernels of the retractions $\rho\colon G_\Gamma\to G_v$, which as we will show are again even FC-type Artin groups under certain conditions on $\lk(v)$. These are the main results of Section \ref{sec: kernels}, where we provide precise description of these kernels.

We prove Theorem \ref{thm: intersections} in Section \ref{sec: intersection}. 
We remark that in contrast with \cite{cumplido2020parabolic, Morris} our proof does not make use of Garside theory. 
The paper is almost self-contained, we rely on the Bass-Serre theorem, the Redemiester-Schreier method and the description of kernels of \cite{BMP}.

We will begin setting some notation.

\section{Notation}
\label{sec: notation}
Let $\Gamma=(V,E,m)$ be an Artin graph.
Note that $V,E$ are the vertices and edges respectively of a simplicial graph. 
We will use standard terminology of graphs:
 for $v\in V$, the set $\lk(v)=\{u: \{v,u\}\in E\}$ is called the {\it link } of $v$. 
The set $\st(v)=\lk(v)\cup \{v\}$ is called the {\it star} of $v$.
Given a subset $S$ of $V$ the {\it subgraph induced by} $S$, and denoted $\Gamma_S$, is the Artin graph with vertices $S$, edges $E'=\{\{u,v\}\in E\mid u,v\in S\}$ and labelling that consists on restricting $m$ to $E'$.

We note that the notion of being a (standard) parabolic subgroup of $G_\Gamma$ depends on the presentation defined by $\Gamma$ and not on the isomorphism class of $G_\Gamma$, so if needed, we will say that a subgroup is {\it $\Gamma$-parabolic}.
This terminology will be relevant in the proof of our main theorem, as we will use that some parabolic subgroups of $G_\Gamma$ are also parabolic  in $G_\Delta$, where $G_\Delta$ is an Artin subgroup of $G_\Gamma$.

For an edge $\{u,v\}\in E$ we denote $m(\{u,v\})$ by $m_{u,v}$ to simplify the notation (note that $m_{u,v}= m_{v,u}$).

Suppose that $\Gamma$ is even, i.e.  $m(E)\subseteq 2\bN$, then for any $S \subseteq V$, one has a retraction $$\rho_S:G_{\Gamma}\longrightarrow G_S$$ defined on the generators of $G_{\Gamma}$ as:
$\rho_S(s) = s$ for $s\in S$, and $\rho_S(v) = 1$ for $v\in V\Gamma - S$. 
When $S=\{v\}$, we might write $\rho_{\{v\}}$ as $\rho_v$. 
Moreover, as $\langle v\rangle \cong \bZ$ via $v^n\mapsto n$, in many cases we use $\bZ$ as the co-domain of $\rho_v$ without mentioning it. 
The use of this isomorphism should be clear from the context.

There is a simple condition for having an {\it even Artin graph of type FC:}  $m$ is an even labelling of $E$ and for any triangle with edges $\{u,v\},\{v,w\},\{w,u\}\in E$, at least two of $m_{u,v},m_{v,w}, m_{w,u}$ are equal to two (see \cite[Lemma 3.1]{BMP}).

If $S,T$ are subsets of a group $G$ we write $S^T$ to denote the set $\{tst^{-1}: t\in T, s\in S\}$.
If~$S=\{s\}$ we just write $s^T$ to mean $\{s\}^T$ and similarly, if $T=\{t\}$ we just write $S^t$ instead of $S^{\{t\}}.$

\section{Even labelling and retractions.}
\label{sec: retractions}
Throughout this section $\Gamma=(V,E,m)$ is an  even Artin  graph.
Some of the results of this section have been proved in a more general context, however as the proof in the even case is very elementary, we have chosen to give the proof to make the paper as much self-contained as possible.
For example, the next lemma holds for any Artin group \cite{VdL}.

\begin{lem}\label{lem: intersection_of_standard_parabolic_subgroups}
Let $A, B \subseteq V$. The following equality holds:
$$G_A \cap G_B = G_{A \cap B}.$$
\end{lem}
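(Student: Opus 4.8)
The plan is to prove the two inclusions separately; only $G_A \cap G_B \subseteq G_{A\cap B}$ requires an argument, and for it I would exploit the retractions available because $\Gamma$ is even. The inclusion $G_{A\cap B}\subseteq G_A\cap G_B$ is immediate: $A\cap B$ is a subset of both $A$ and $B$, so the subgroup it generates sits inside each of $G_A$ and $G_B$.

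For the reverse inclusion, take $g\in G_A\cap G_B$ and apply $\rho_B\colon G_\Gamma\to G_B$. Since $g\in G_A$, we may write $g$ as a word in the generators lying in $A$; as $\rho_B$ fixes the generators in $A\cap B$ and kills those in $A\setminus B$, the element $\rho_B(g)$ is a word in the generators of $A\cap B$, hence $\rho_B(g)\in G_{A\cap B}$. On the other hand $g\in G_B$, and $\rho_B$ restricted to $G_B$ is the identity (it fixes every generator of $G_B$, hence every element of $G_B$), so $\rho_B(g)=g$. Combining, $g=\rho_B(g)\in G_{A\cap B}$, as desired. One could run the argument symmetrically with $\rho_A$ in place of $\rho_B$.

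The only delicate point — minor here, but worth isolating — is that $\rho_B$ is a well-defined homomorphism and a genuine retraction onto $G_B$. Well-definedness is exactly where evenness enters: for an edge $\{u,v\}$ with, say, $u\in B$ and $v\notin B$, both sides of the braid relation are sent to $u^{m_{u,v}/2}$, and the two exponents agree precisely because $m_{u,v}$ is even; and since $G_B$ is generated by $B$, a homomorphism fixing each element of $B$ fixes all of $G_B$. This is already recorded in Section~\ref{sec: notation}, so the lemma needs no further input. By contrast, the corresponding statement for arbitrary (not necessarily even) Artin groups is Van der Lek's theorem and its proof is considerably more involved; the even hypothesis is what makes this short retraction argument possible.
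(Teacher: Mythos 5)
Your proof is correct and follows essentially the same route as the paper: both arguments hinge on the retraction $\rho_B$ (available because $\Gamma$ is even) fixing $g\in G_B$ while simultaneously pushing $g\in G_A$ into $G_{A\cap B}$. The paper phrases this via the identity $\rho_A\circ\rho_B=\rho_{A\cap B}$ whereas you observe directly that $\rho_B(G_A)\subseteq G_{A\cap B}$, but this is only a cosmetic difference.
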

\begin{proof}
Let $\rho_A$, and $\rho_B$ be the corresponding retractions for $G_A$, and $G_B$ respectively. Consider the compositions $\rho_A \circ \rho_B$ and $\rho_B \circ \rho_A$. 
When applying them to $v \in V$ we notice that~$(\rho_A \circ \rho_B)(v) = \rho_{A \cap B}(v) =  (\rho_B \circ \rho_A)(v)$. Extending to morphisms on the group $G_{\Gamma}$ we obtain a commutative diagram of retractions, in the form: $\rho_A\circ \rho_B = \rho_B\circ \rho_A = \rho_{A \cap B}.$

As $G_{A \cap B} \subseteq G_A$ and $G_{A \cap B} \subseteq G_B$ one has $G_{A \cap B} \subseteq G_A \cap G_B$.

To show the other inclusion $G_A \cap G_B \subseteq G_{A \cap B}$, pick an element~$x \in G_A \cap G_B$. 
One has~$x \in G_A$ and~$x \in G_B$, so~$\rho_A(x) = \rho_B(x) = x$. 
Now using that retractions commute we obtain:
$$\rho_{A \cap B}(x) = (\rho_A\circ \rho_B) (x) = \rho_A(\rho_B (x)) = \rho_A(x) = x.$$
As $\rho_{A \cap B}$ is a retraction, we have $x \in G_{A \cap B}$, as required.
\end{proof}


\begin{lem}\label{lem: proper_inclusions_parabolics}
Let $A, B \subseteq V$ and $g, h \in G$. Then $gG_Ag^{-1} \subsetneq hG_Bh^{-1}$ implies $A\subsetneq B$.
\end{lem}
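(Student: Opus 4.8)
The plan is to first reduce to the case $h=1$: conjugating the inclusion $gG_Ag^{-1}\subsetneq hG_Bh^{-1}$ by $h^{-1}$ and replacing $g$ by $h^{-1}g$, we may assume the hypothesis reads $gG_Ag^{-1}\subsetneq G_B$. The argument then has two halves: prove $A\subseteq B$, and rule out $A=B$.

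For the inclusion $A\subseteq B$, I would test against the one-generator retractions $\rho_v\colon G_\Gamma\to\gen v$ available in the even case (Section \ref{sec: notation}). Fix $v\in V\setminus B$. Since $G_B$ is generated by vertices different from $v$, we have $\rho_v(G_B)=1$, hence $\rho_v(gG_Ag^{-1})=1$. As $\rho_v(g)$ and all of $\rho_v(G_A)$ lie in the abelian group $\gen v\cong\bZ$, conjugation there is trivial, so $\rho_v(G_A)=1$. In particular $v\in A$ would give $\rho_v(v)=1$, which is false in $\gen v\cong\bZ$; hence $v\notin A$. This proves $V\setminus B\subseteq V\setminus A$, i.e. $A\subseteq B$.

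For strictness, suppose toward a contradiction that $A=B$, so $gG_Ag^{-1}\subseteq G_A$. The key point is that $\rho_A$ fixes $G_A$ pointwise: for any $a\in G_A$ we have $gag^{-1}\in G_A$, hence $gag^{-1}=\rho_A(gag^{-1})=\rho_A(g)\,a\,\rho_A(g)^{-1}$. Rearranging, $c\coloneqq\rho_A(g)^{-1}g$ commutes with every $a\in G_A$, i.e. $c\in C_{G_\Gamma}(G_A)$, and writing $g=\rho_A(g)\,c$ with $\rho_A(g)\in G_A$ we get $gG_Ag^{-1}=\rho_A(g)\,(cG_Ac^{-1})\,\rho_A(g)^{-1}=\rho_A(g)\,G_A\,\rho_A(g)^{-1}=G_A$, contradicting the strict inclusion. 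Hence $A\neq B$, and combined with $A\subseteq B$ this gives $A\subsetneq B$.

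I expect the only delicate step to be the last one, which amounts to showing that a standard parabolic of an even Artin group cannot be properly conjugated inside itself; the retraction $\rho_A$ resolves this cleanly precisely because it is the identity on $G_A$ and therefore pins the conjugator $g$ down modulo $C_{G_\Gamma}(G_A)$. The rest is routine bookkeeping.
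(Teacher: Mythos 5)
Your argument is correct, and it runs on the same fuel as the paper's---the retractions $\rho_S$ that exist because $\Gamma$ is even---but the decomposition is genuinely different. The paper handles both containment and strictness in one stroke: after reducing to $fG_Af^{-1}\subsetneq G_B$, it applies $\rho_A$ to get $G_A\subseteq G_{A\cap B}$ (hence $A\subseteq A\cap B$) and applies $\rho_B$ to see that the hypothesis is equivalent to $G_{A\cap B}\subsetneq G_B$ (hence $A\cap B\subsetneq B$), so $A\subseteq A\cap B\subsetneq B$. You instead split the claim: containment via the single-vertex retractions $\rho_v$ for $v\notin B$ (a perfectly sound, more granular substitute for applying $\rho_A$, implicitly using that $\gen{v}\cong\bZ$ as the paper sets up in Section~\ref{sec: notation}), and strictness by showing that $gG_Ag^{-1}\subseteq G_A$ forces $gG_Ag^{-1}=G_A$. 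That last statement is exactly the paper's Lemma~\ref{lem: containing implies equality}, which the authors state and prove separately two lemmas later by the same device ($\rho_A$ fixes $G_A$ pointwise); your detour through the centralizer is unnecessary---once you know $gag^{-1}=\rho_A(g)\,a\,\rho_A(g)^{-1}$ for all $a\in G_A$ you can read off $gG_Ag^{-1}=\rho_A(g)G_A\rho_A(g)^{-1}=G_A$ directly---but it is harmless. The paper's version is shorter and yields the slightly finer conclusion $A=A\cap B$; yours has the merit of isolating the self-normalization property of standard parabolics, which the paper needs anyway.
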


\begin{proof}
Conjugating by $h^{-1}$, we can write the proper inclusion $gG_Ag^{-1} \subsetneq hG_B h^{-1}$ in the equivalent form $fG_Af^{-1} \subsetneq G_B$, for $f = h^{-1}g$. 
Applying $\rho_B$ we obtain:
$$fG_Af^{-1} = \rho_B(fG_Af^{-1}) =  \rho_B(f)G_{A\cap B}\rho_B(f)^{-1} \subsetneq G_B.$$
So, the proper inclusion $fG_Af^{-1} \subsetneq G_B$ is equivalent to the proper inclusion 
$$\rho_B(f)G_{A\cap B}\rho_B(f)^{-1} \subsetneq G_B,$$ which after conjugating by $\rho_B(f)^{-1}$ becomes equivalent to $G_{A\cap B} \subsetneq G_B$, and this implies that $A \cap B \subsetneq B$.
	
Instead, applying $\rho_A$ to $fG_Af^{-1} \subsetneq G_B$ we obtain
$$G_A = \rho_A(f)G_A\rho_A(f)^{-1} = \rho_A(fG_Af^{-1}) \subseteq \rho_A(G_B) = G_{A\cap B}.$$
The inclusion $G_A \subseteq G_{A\cap B}$ implies $A\subseteq A\cap B$.
Ultimately $A\subseteq A\cap B \subsetneq B$, which means that $A \subsetneq B$, as required.
\end{proof}

In the next lemma, we reduce the problem of showing that an intersection of parabolic subgroups is again parabolic, to deciding whether the intersection of two conjugates of a standard parabolic subgroup $G_A$ is again parabolic. Once again, we will make use of retractions.

\begin{lem}\label{lem: intersection_of_two_parabolic_subgroups}
Let $f,g\in G$ and $A,B\subseteq V$.
There exist $a\in G_A$ and $b \in G_B$ such that $$fG_Af^{-1}\cap g G_Bg^{-1} = faG_Ca^{-1}f^{-1}\cap gb G_Cb^{-1}g^{-1},$$
where $C= A\cap B$. 
\end{lem}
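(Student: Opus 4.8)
The plan is to massage each of the two conjugating elements $f$ and $g$ so that, modulo replacing the standard parabolics $G_A$ and $G_B$ by the smaller standard parabolic $G_C$ with $C = A\cap B$, the conjugators become "compatible" with this smaller subgroup. The key observation is that conjugating a standard parabolic by an element of that same parabolic changes nothing: $aG_Aa^{-1} = G_A$ for $a\in G_A$. So I have freedom to move $f$ within its coset $fG_A$ and $g$ within $gG_B$ without altering either of the two parabolic subgroups $fG_Af^{-1}$ or $gG_Bg^{-1}$, hence without altering their intersection. The content of the lemma is that this freedom can be used to arrange $fG_Af^{-1}\cap gG_Bg^{-1} = faG_Ca^{-1}f^{-1}\cap gbG_Cb^{-1}g^{-1}$ for suitable $a\in G_A$, $b\in G_B$.

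First I would reduce to $gG_Bg^{-1}$ being standard by conjugating the whole equation by $g^{-1}$; set $h = g^{-1}f$, so the claim becomes: there are $a\in G_A$, $b\in G_B$ with $hG_Ah^{-1}\cap G_B = haG_Ca^{-1}h^{-1}\cap bG_Cb^{-1}$. Now apply the retraction $\rho_B\colon G_\Gamma\to G_B$ to the subgroup $hG_Ah^{-1}\cap G_B$. Since this subgroup lies in $G_B$, applying $\rho_B$ fixes it pointwise, so $hG_Ah^{-1}\cap G_B = \rho_B(hG_Ah^{-1}\cap G_B)\subseteq \rho_B(hG_Ah^{-1}) = \rho_B(h)\,\rho_B(G_A)\,\rho_B(h)^{-1} = \rho_B(h)\,G_{A\cap B}\,\rho_B(h)^{-1} = \rho_B(h)G_C\rho_B(h)^{-1}$, using Lemma~\ref{lem: intersection_of_standard_parabolic_subgroups}. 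So the intersection is already contained in a conjugate of $G_C$ by an element of $G_B$; call it $b\rho := \rho_B(h)$, which is the $b$ we want (it lies in $G_B$). Symmetrically, conjugating the original equation by $f^{-1}$ instead and applying $\rho_A$, the intersection lies in a conjugate of $G_C$ by an element that, after undoing the conjugation, has the form $f a f^{-1}$ with $a\in G_A$.

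The second half is to check the reverse inclusions so that replacing $G_A, G_B$ by these conjugates of $G_C$ does not shrink the intersection. Here I would argue: we have shown $hG_Ah^{-1}\cap G_B\subseteq \rho_B(h)G_C\rho_B(h)^{-1}$ and, running the same argument with $\rho_A$ on the $f^{-1}$-conjugated picture, $hG_Ah^{-1}\cap G_B\subseteq h\rho_A(h^{-1})^{-1}G_C\rho_A(h^{-1})h^{-1}$ — i.e. the intersection sits inside $faG_Ca^{-1}f^{-1}$ for an explicit $a\in G_A$. Hence $hG_Ah^{-1}\cap G_B$ is contained in the intersection of these two conjugates of $G_C$. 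Conversely $faG_Ca^{-1}f^{-1}\subseteq fG_Af^{-1}$ (as $aG_Ca^{-1}\subseteq aG_Aa^{-1} = G_A$) and $gbG_Cb^{-1}g^{-1}\subseteq gG_Bg^{-1}$, so the intersection of the two $G_C$-conjugates is contained in $fG_Af^{-1}\cap gG_Bg^{-1}$. Both inclusions together give equality. Translating back through the conjugation by $g$ finishes the proof.

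The main obstacle — really the only subtle point — is bookkeeping the conjugators correctly when passing back and forth between the "$gG_Bg^{-1}$ standard" picture and the "$fG_Af^{-1}$ standard" picture, making sure the element produced by $\rho_A$ genuinely has the form $faf^{-1}$ with $a\in G_A$ and the one from $\rho_B$ has the form $gbg^{-1}$ with $b\in G_B$, and that the two containments are in the \emph{same} subgroup $hG_Ah^{-1}\cap G_B$. Everything else is a direct application of Lemma~\ref{lem: intersection_of_standard_parabolic_subgroups} together with the trivial identities $aG_Aa^{-1}=G_A$ for $a\in G_A$ and $\rho_S$ restricting to the identity on $G_S$.
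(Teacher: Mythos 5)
Your proof is correct and uses essentially the same approach as the paper: apply the retractions $\rho_A$ and $\rho_B$ together with Lemma~\ref{lem: intersection_of_standard_parabolic_subgroups} to trap the intersection inside conjugates $faG_Ca^{-1}f^{-1}$ and $gbG_Cb^{-1}g^{-1}$ with $a\in G_A$, $b\in G_B$. The only (harmless) difference is organizational: you run the two retraction arguments in parallel and conclude by a two-sided inclusion, whereas the paper substitutes sequentially, first rewriting $G_A\cap hG_Bh^{-1}$ as $hG_Bh^{-1}\cap aG_Ca^{-1}$ and then applying $\rho_B$ to that.
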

\begin{proof}
One has the equality 
$$fG_Af^{-1} \cap gG_B g^{-1} = f[G_A \cap (f^{-1}g) G_B (f^{-1}g)^{-1}]f^{-1}.$$
Set $h = f^{-1}g$ and consider $P = G_A \cap hG_B h^{-1}$.
Using $P\subseteq G_A$, and $G_A \cap G_B = G_{A \cap B}$ (see Lemma \ref{lem: intersection_of_standard_parabolic_subgroups}) we obtain:
\begin{align*}
P = \rho_A(P) = \rho_A(G_A \cap hG_B h^{-1}) & \subseteq \rho_A(G_A)\cap \rho_A(hG_B h^{-1}) \\
& = G_A \cap \rho_A(h)\rho_A(G_B)\rho_A(h^{-1}) \\
& = \rho_A(h)G_{A\cap B}\rho_A(h)^{-1}.
\end{align*}
Setting $a= \rho_A(h)\in G_A$ and $A\cap B = C$ we can write the inclusion above as~$P\subseteq aG_{C}a^{-1}$, and we notice that $aG_{C}a^{-1} \subseteq G_A$. Also,~$P = G_A \cap hG_B h^{-1}$, so we have
\begin{align*}
P = (G_A \cap hG_B h^{-1}) \cap aG_{C}a^{-1} & = h G_B h^{-1} \cap (G_A \cap aG_{C}a^{-1}) \\
& = h G_B h^{-1} \cap aG_{C}a^{-1}.
\end{align*}
Multiplying the last equation by $h^{-1}$ and denoting $P' = h^{-1}Ph$, $k = h^{-1}a$, we obtain:
$P' = G_{B} \cap kG_{C} k^{-1}.$

Applying the same procedure as for $P$ above, we obtain:
\begin{align*}
    P' = \rho_B(P') & = \rho_B(G_B \cap kG_{C} k^{-1}) \\
    & \subseteq \rho_B(G_B)\cap \rho_B(kG_{C} k^{-1}) \\
    & = \rho_B(k)G_{B\cap C}\rho_B(k)^{-1}\\
    & = \rho_B(k)G_{C}\rho_B(k)^{-1}.
\end{align*}
Setting $b = \rho_B(k)\in G_B$ we express the inclusion above as $P' \subseteq bG_{C} b^{-1} \subseteq G_B$.
Putting together~$P' = G_{B} \cap kG_{C} k^{-1}$ and~$P' \subseteq bG_{C} b^{-1}$ we have:
$$P' = (G_{B} \cap kG_{C} k^{-1}) \cap bG_{C} b^{-1} =  h^{-1}aG_{c} a^{-1}h \cap (G_{B} \cap bG_{C} b^{-1}).$$
Using $G_{B} \cap bG_{C} b^{-1} = bG_{C} b^{-1}$, and $P' = h^{-1}Ph$ we ultimately have:
$$P = aG_{C} a^{-1} \cap hbG_{C} b^{-1}h^{-1}.$$

Turning back, we have $fG_Af^{-1}\cap g G_Bg^{-1} = fPf^{-1}$, and $h=f^{-1}g$, so we obtain: 
$$fG_Af^{-1}\cap g G_Bg^{-1} = faG_Ca^{-1}f^{-1}\cap gb G_Cb^{-1}g^{-1},$$
where $C= A\cap B$, as desired.
\end{proof}

The next lemma holds for any Artin group, see \cite[Proposition 2.6]{MPV}.
The proof in the even case is much simpler.

\begin{lem}\label{lem: containing implies equality}
Let  $g,h\in G_{\Gamma}$ and $A\subseteq V$.
If $gG_Ag^{-1}\leqslant hG_Ah^{-1}$ then $gG_Ag^{-1}=  hG_Ah^{-1}$.
\end{lem}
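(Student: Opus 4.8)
The plan is to reduce the containment $gG_Ag^{-1} \leqslant hG_Ah^{-1}$ to a containment of the form $fG_Af^{-1} \leqslant G_A$ by conjugating with $h^{-1}$ (setting $f = h^{-1}g$), and then to show this forces $fG_Af^{-1} = G_A$. Once we have that, conjugating back by $h$ gives $gG_Ag^{-1} = hG_Ah^{-1}$, as desired. So the whole problem is: if $fG_Af^{-1} \leqslant G_A$ then $fG_Af^{-1} = G_A$.

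To prove this, I would apply the retraction $\rho_A \colon G_\Gamma \to G_A$ to the inclusion $fG_Af^{-1} \leqslant G_A$. Since $\rho_A$ is a retraction onto $G_A$ and $G_A \leqslant G_A$, the image $\rho_A(fG_Af^{-1})$ equals $\rho_A(f) G_A \rho_A(f)^{-1}$, which is again all of $G_A$ (conjugation is an automorphism of $G_A$). On the other hand, $\rho_A$ restricted to $G_A$ is the identity, so applying $\rho_A$ to the subgroup $fG_Af^{-1}$, which sits inside $G_A$, leaves it unchanged: $\rho_A(fG_Af^{-1}) = fG_Af^{-1}$. Comparing the two computations gives $fG_Af^{-1} = G_A$.

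Unwinding: from $gG_Ag^{-1} \leqslant hG_Ah^{-1}$ we get, conjugating by $h^{-1}$, that $(h^{-1}g)G_A(h^{-1}g)^{-1} \leqslant G_A$; by the argument above this is an equality, and conjugating back by $h$ yields $gG_Ag^{-1} = hG_Ah^{-1}$.

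The only point requiring a little care is the claim that $\rho_A(fG_Af^{-1}) = fG_Af^{-1}$: this uses that $fG_Af^{-1}$ is a subset of $G_A$ (our hypothesis) and that $\rho_A$ is the identity on $G_A$, hence the identity on any subset of $G_A$. There is no real obstacle here — the argument is genuinely short in the even case precisely because the retractions $\rho_S$ are available; the general-Artin-group statement cited as \cite[Proposition 2.6]{MPV} has no such luxury. I will write it out as a one-paragraph proof.

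\begin{proof}
Conjugating by $h^{-1}$ and setting $f = h^{-1}g$, the hypothesis $gG_Ag^{-1}\leqslant hG_Ah^{-1}$ is equivalent to $fG_Af^{-1}\leqslant G_A$, and the desired conclusion $gG_Ag^{-1}= hG_Ah^{-1}$ is equivalent to $fG_Af^{-1}= G_A$. So it suffices to prove that $fG_Af^{-1}\leqslant G_A$ implies $fG_Af^{-1}= G_A$.

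Apply the retraction $\rho_A\colon G_\Gamma\to G_A$ to the subgroup $fG_Af^{-1}$. On one hand, since $fG_Af^{-1}\leqslant G_A$ and $\rho_A$ restricted to $G_A$ is the identity, we have $\rho_A(fG_Af^{-1}) = fG_Af^{-1}$. On the other hand,
$$\rho_A(fG_Af^{-1}) = \rho_A(f)\,\rho_A(G_A)\,\rho_A(f)^{-1} = \rho_A(f)\,G_A\,\rho_A(f)^{-1} = G_A,$$
the last equality because conjugation by $\rho_A(f)\in G_A$ is an automorphism of $G_A$. Comparing the two expressions gives $fG_Af^{-1} = G_A$, as required.
\end{proof}
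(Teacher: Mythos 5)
Your proof is correct and is essentially the paper's own argument: conjugate by $h^{-1}$ to reduce to $fG_Af^{-1}\leqslant G_A$, then apply the retraction $\rho_A$ and compare $\rho_A(fG_Af^{-1})=fG_Af^{-1}$ (identity on $G_A$) with $\rho_A(fG_Af^{-1})=\rho_A(f)G_A\rho_A(f)^{-1}=G_A$. The paper compresses this into one line, but the two computations you spell out are exactly the ones it invokes.
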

\begin{proof}
We have that  $gG_Ag^{-1}\leqslant hG_Ah^{-1}$ if and only if $h^{-1}gG_Ag^{-1}h\leqslant G_A$. In particular,
$h^{-1}gG_Ag^{-1}h= \rho_A(h^{-1}gG_Ag^{-1}h) =G_A$. The lemma follows.
\end{proof}

\begin{cor}
Let $A,B\subseteq V$ and $f,g\in G_\Gamma$.
Let $H=fG_Af^{-1}$ and $K=gG_Bg^{-1}$ be parabolic subgroups of $G_\Gamma$. If $H=K$ then $A=B$.
\end{cor}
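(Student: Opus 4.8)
\emph{Plan.} The statement is a two-sided refinement of Lemma~\ref{lem: proper_inclusions_parabolics}: there an honest proper inclusion of conjugates of \emph{possibly different} standard parabolics was converted into a proper inclusion of the index sets, and the plan here is to notice that the relevant half of that argument only uses the inclusion, not its properness, and then to apply it twice. Concretely, from $H=K$ one gets both $fG_Af^{-1}\subseteq gG_Bg^{-1}$ and $gG_Bg^{-1}\subseteq fG_Af^{-1}$; each inclusion will produce one of the containments $A\subseteq B$ and $B\subseteq A$, and together they give $A=B$.

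First I would record the symmetry: it suffices to show that $fG_Af^{-1}\subseteq gG_Bg^{-1}$ forces $A\subseteq B$, since interchanging the roles of $(f,A)$ and $(g,B)$ then yields $B\subseteq A$ from the reverse inclusion. To prove $A\subseteq B$, conjugate by $g^{-1}$ and put $h=g^{-1}f$, so that $hG_Ah^{-1}\subseteq G_B$. Apply the retraction $\rho_A$ to both sides. On the left, since $\rho_A(h)\in G_A$ we have $\rho_A(hG_Ah^{-1})=\rho_A(h)\,G_A\,\rho_A(h)^{-1}=G_A$; on the right, monotonicity of images together with the identity $\rho_A(G_B)=G_{A\cap B}$ (from Lemma~\ref{lem: intersection_of_standard_parabolic_subgroups} and its proof) gives $\rho_A(hG_Ah^{-1})\subseteq G_{A\cap B}$. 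Hence $G_A\subseteq G_{A\cap B}$, and since conversely $G_{A\cap B}\subseteq G_A$ we get $G_A=G_{A\cap B}$; reading this off on generators (or applying $\rho_{A\cap B}$, which is the identity on $G_{A\cap B}$ and kills every $v\in A\setminus B$) yields $A\subseteq A\cap B$, i.e. $A\subseteq B$, as wanted.

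I do not expect a real obstacle: the corollary is essentially bookkeeping with the retractions $\rho_A$, $\rho_B$ already set up for Lemmas~\ref{lem: intersection_of_standard_parabolic_subgroups}--\ref{lem: proper_inclusions_parabolics}. The one point that needs a little care is to make sure the argument is run directly from the inclusions and does \emph{not} route through Lemma~\ref{lem: proper_inclusions_parabolics} itself (which is stated for \emph{proper} inclusions), so that there is no circularity and the two symmetric applications are both legitimate.
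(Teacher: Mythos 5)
Your proof is correct. The paper states this corollary without any proof (it is presented as immediate from the preceding lemmas), and your argument --- isolating the $\rho_A$ half of the proof of Lemma~\ref{lem: proper_inclusions_parabolics}, observing that it never uses properness of the inclusion, and applying it once in each direction --- is exactly the derivation the paper's setup intends.
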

In particular, if $K$ is a parabolic subgroup of $G_\Gamma$, there is a unique $S\subseteq V$ such that $K$ is conjugate to $G_S$. 
In that event, we say that $K$ is {\it parabolic over} $S$. We note that if $K = fG_Sf^{-1}$, where $f\in G_\Gamma$, then $K$ is also a retract of $G_\Gamma$, with the retraction homomorphism $$\rho_K =\rho_S^f \colon G_\Gamma \longrightarrow K = fG_Sf^{-1},\qquad \rho_S^f(g) \coloneqq f\rho_S(f^{-1}gf)f^{-1}$$ for all $g \in G_\Gamma$.
We will preferably use the notation $\rho_K$, however we might use $\rho_S^f$ if we want to emphasize the choice of the element in $fN_{G_{\Gamma}}(G_S)$,   the coset of the normalizer of $G_S$,  that we are using to conjugate.

\begin{lem}\label{lem: link exterior}
Let $A\subseteq V$ and $g\in G_\Gamma$.
 Suppose that $G_A \cup gG_Ag^{-1}$
is not contained in a proper parabolic
subgroup of $G$ and for some $x\in V- A$, one has that  $A$ is not contained in $\lk(x)$.
Then $G_A \cap  gG_Ag^{-1}$
is  contained in a parabolic subgroup over a
proper subset of $A$. 
\end{lem}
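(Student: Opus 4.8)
The plan is to realise $G_\Gamma$ as a nontrivial amalgamated product along the vertex $x$ and to study how the two parabolic subgroups act on the associated Bass--Serre tree; the intersection will be squeezed into the stabilizer of an edge, which after applying the retraction $\rho_A$ is forced to lie in $G_{A\cap\lk(x)}$.

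First I would fix $x\in V-A$ with $A\not\subseteq\lk(x)$ as in the statement and set $C\coloneqq A\cap\lk(x)$, noting that $C\subsetneq A$. Since $x\notin A$ we have $A\subseteq V-\{x\}$, so $A\not\subseteq\lk(x)$ forces $\lk(x)\subsetneq V-\{x\}$, and of course $\lk(x)\subsetneq\st(x)$. Hence $G_\Gamma$ decomposes as the \emph{nontrivial} amalgamated product
$$G_\Gamma = G_{V-\{x\}} \ast_{G_{\lk(x)}} G_{\st(x)};$$
the factors embed because the retractions $\rho_{\lk(x)}$ (available as $\Gamma$ is even) split the inclusions $G_{\lk(x)}\hookrightarrow G_{V-\{x\}}$ and $G_{\lk(x)}\hookrightarrow G_{\st(x)}$ — alternatively one may invoke van der Lek's theorem. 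Let $T$ be the Bass--Serre tree of this splitting and $v_0$ the vertex with $\Stab(v_0)=G_{V-\{x\}}$; recall that the edges of $T$ incident to $v_0$ are exactly the cosets $a\,G_{\lk(x)}$ with $a\in G_{V-\{x\}}$, the stabilizer of such an edge being $a\,G_{\lk(x)}\,a^{-1}$.

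Next, since $x\notin A$ one has $G_A\leqslant G_{V-\{x\}}=\Stab(v_0)$ and likewise $gG_Ag^{-1}\leqslant gG_{V-\{x\}}g^{-1}=\Stab(gv_0)$. Here the first hypothesis enters: if $v_0=gv_0$ then $g\in\Stab(v_0)=G_{V-\{x\}}$, whence $G_A\cup gG_Ag^{-1}\subseteq G_{V-\{x\}}$, a proper parabolic subgroup of $G_\Gamma$ — contradiction. So $v_0\neq gv_0$, and therefore $H\coloneqq G_A\cap gG_Ag^{-1}$ fixes two distinct vertices of $T$, hence fixes the geodesic joining them pointwise, and in particular fixes its initial edge $e$ at $v_0$. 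Writing $\Stab(e)=a_0\,G_{\lk(x)}\,a_0^{-1}$ with $a_0\in G_{V-\{x\}}$, we obtain $H\leqslant G_A\cap a_0\,G_{\lk(x)}\,a_0^{-1}$. Finally I would push this through $\rho_A\colon G_\Gamma\to G_A$: since $H\leqslant G_A$ it is fixed by $\rho_A$, and $\rho_A(G_{\lk(x)})=G_{A\cap\lk(x)}=G_C$ (as $\rho_A\circ\rho_{\lk(x)}=\rho_C$, cf.\ the proof of Lemma~\ref{lem: intersection_of_standard_parabolic_subgroups}), so
$$H=\rho_A(H)\leqslant \rho_A\big(G_A\cap a_0\,G_{\lk(x)}\,a_0^{-1}\big)\leqslant \rho_A(a_0)\,G_C\,\rho_A(a_0)^{-1},$$
a parabolic subgroup over the proper subset $C=A\cap\lk(x)$ of $A$, as required.

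The main obstacle is not computational but conceptual: recognising that one should split along a vertex $x$ \emph{outside} $A$ and that the stabilizer of an edge at $v_0$, once intersected with $G_A$ and retracted by $\rho_A$, is trapped inside $G_{A\cap\lk(x)}$ — with the hypothesis $A\not\subseteq\lk(x)$ being exactly what makes $A\cap\lk(x)$ a proper subset of $A$. Beyond that, the only points requiring care are the validity and nontriviality of the amalgamated decomposition and the standard Bass--Serre facts used (a subgroup fixing two vertices of a tree fixes the arc between them; the description of the edges incident to $v_0$ and their stabilizers), all of which are routine.
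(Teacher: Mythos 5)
Your proposal is correct and takes essentially the same route as the paper: the same amalgamated splitting of $G_\Gamma$ along the vertex $x$, the same use of the Bass--Serre tree to trap $G_A\cap gG_Ag^{-1}$ in an edge stabilizer conjugate to $G_{\lk(x)}$, and the same use of the hypothesis to rule out $v_0=gv_0$. The only (minor) difference is the final step: the paper invokes Lemma~\ref{lem: intersection_of_two_parabolic_subgroups} to rewrite $G_A\cap hG_{\lk(x)}h^{-1}$ as an intersection of parabolics over $A\cap\lk(x)$, whereas you apply the retraction $\rho_A$ directly --- a slight streamlining of the same underlying retraction idea.
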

\begin{proof}
Let $A\subseteq V$ and $g\in G_\Gamma$ as in the hypothesis.
Consider $P = G_A \cap gG_Ag^{-1}$. 
Assume that there is an $x\in V - A$ with the property $\lk(x)\not\supseteq A$. 

If $G_{V- \{x\}} = gG_{V- \{x\}}$, then $g \in G_{V- \{x\}}$. This means that both $G_A$ and $gG_Ag^{-1}$ are parabolic subgroups in $G_{V- \{x\}}$, and hence $G_A \cup gG_Ag^{-1}$ is contained in the proper parabolic subgroup $G_{V- \{x\}}$ of $G$. 
This contradicts the assumptions of the proposition, so suppose that $G_{V- \{x\}} \neq gG_{V- \{x\}}$.
From the presentation, one has an splitting of $G_\Gamma$ as an amalgamated free product:
$$G_\Gamma = G_{\st(x)}*_{G_{\lk(x)}} G_{V- \{x\}}.$$
Consider the Bass-Serre tree $T$ corresponding to this splitting (see for example \cite{DicksDunwoody}). 
There are two types of vertices in $T$: left cosets of $G_{\st(x)}$, and left cosets of $G_{V- \{x\}}$ in $G$. Only vertices of different type can be adjacent in $T$.
The group $G$ acts naturally on $T$, without edge inversions. Moreover, the vertex stabilizers correspond to conjugates of $G_{\st(x)}$ and conjugates of $G_{V- \{x\}}$ for the respective type of vertices, while the edge stabilizers are conjugates of ${G_{\lk(x)}}$. 

In the tree $T$, both $G_{V- \{x\}}$ and $gG_{V- \{x\}}$, are distinct vertices of the same type. Their stabilizers are $G_{V- \{x\}}$, and $gG_{V- \{x\}}g^{-1}$ respectively. As we are on a tree, there is a unique geodesic $p$ in $T$ connecting $G_{V- \{x\}}$ and $gG_{V- \{x\}}$. 

Since $x\not\in A$, we have:
$G_A \subseteq G_{V- \{x\}}$ and $gG_Ag^{-1} \subseteq gG_{V- \{x\}}g^{-1}$,
which means that our parabolic subgroups~$G_A$ and $gG_Ag^{-1}$  stabilize the vertices corresponding to the cosets $G_{V- \{x\}}$ and $gG_{V- \{x\}}$ respectively. The intersection $G_A \cap gG_Ag^{-1}$ stabilizes the geodesic $p$ connecting those vertices, and hence it stabilizes any edge belonging to $p$.
Since stabilizers of edges in $T$ are conjugates of ${G_{\lk(x)}}$, we have:
$$P = G_A \cap gG_Ag^{-1} \subseteq hG_{\lk(x)}h^{-1}$$
for some $h\in G$. 
Now one can write $P$ as:
\begin{equation*}
P= G_A \cap gG_Ag^{-1} \cap hG_{\lk(x)}h^{-1} = (G_A \cap hG_{\lk(x)}h^{-1}) \cap (gG_Ag^{-1} \cap hG_{\lk(x)}h^{-1})
\end{equation*}

By Lemma \ref{lem: intersection_of_two_parabolic_subgroups}, one can express $G_A \cap hG_{\lk(x)}h^{-1}$ as an intersection of two parabolic subgroups over $\lk(x)\cap A \subsetneq A$ (because $\lk(x)\not\supseteq A$), hence $P$ is contained in a parabolic subgroup over a proper subset of $A$. This completes the proof.
\end{proof}

\begin{lem}\label{lem: reducing subgraphs}
Let $\Delta$ be a subgraph of $\Gamma$, $A\subseteq V_\Delta$ and $g,t\in G_\Gamma$. 
If $G_A\cup gG_Ag^{-1}$ is contained in $tG_\Delta t^{-1}$,
then there is $h\in G_\Delta$ such that:
\begin{enumerate}
\item[(i)] $G_A=hG_Ah^{-1}$ if and only if $G_A=gG_Ag^{-1}$.
\item[(ii)] $G_A\cap hG_Ah^{-1}$ is $\Delta$-parabolic if and only if $G_A\cap gG_Ag^{-1}$ is $\Gamma$-parabolic.
\item[(iii)] $G_A\cap hG_Ah^{-1}$ is contained on a $\Delta$-parabolic over a proper subset of $A$  if and only if $G_A\cap gG_Ag^{-1}$ is $\Gamma$-parabolic over a proper subset of $A$.
\end{enumerate} 
\end{lem}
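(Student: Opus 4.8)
The plan is to run the entire argument through the even retraction $\rho_\Delta\colon G_\Gamma\to G_\Delta$: this will both produce the element $h$ explicitly and let us pass freely between $\Gamma$-parabolicity and $\Delta$-parabolicity for subgroups that happen to lie in $G_\Delta$. Set $t_0=\rho_\Delta(t)$ and $h=\rho_\Delta(g)$, both of which lie in $G_\Delta$, and put $z=t t_0^{-1}\in G_\Gamma$.

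First I would extract two consequences of the hypothesis. Since $G_A\subseteq tG_\Delta t^{-1}$ we have $t^{-1}G_At\subseteq G_\Delta$; because $\rho_\Delta$ is the identity on $G_\Delta$ and $\rho_\Delta(G_A)=G_A$ (as $A\subseteq V_\Delta$), applying $\rho_\Delta$ gives $t^{-1}G_At=t_0^{-1}G_At_0$, i.e.\ $zG_Az^{-1}=G_A$, so $z$ normalizes $G_A$. Writing $f=t^{-1}g$, the inclusion $gG_Ag^{-1}\subseteq tG_\Delta t^{-1}$ gives $fG_Af^{-1}\subseteq G_\Delta$, hence $fG_Af^{-1}=\rho_\Delta(f)G_A\rho_\Delta(f)^{-1}$; since $\rho_\Delta(f)=t_0^{-1}h$ this reads $gG_Ag^{-1}=t\,fG_Af^{-1}\,t^{-1}=z\,hG_Ah^{-1}\,z^{-1}$. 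Thus conjugation by $z^{-1}$ is an (inner) automorphism of $G_\Gamma$ carrying the ordered pair $(G_A,\,gG_Ag^{-1})$ to $(G_A,\,hG_Ah^{-1})$, and in particular $G_A\cap hG_Ah^{-1}=z^{-1}\bigl(G_A\cap gG_Ag^{-1}\bigr)z$.

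Assertion (i) is then immediate, since the equality $G_A=gG_Ag^{-1}$ is preserved and reflected by conjugation by $z^{-1}$. For (ii) and (iii) I would add one observation: a $\Gamma$-parabolic subgroup $Q$ contained in $G_\Delta$ is $\Delta$-parabolic, with a defining subset obtained by intersecting with $V_\Delta$. Indeed, if $Q=cG_Sc^{-1}$ then $Q=\rho_\Delta(Q)=\rho_\Delta(c)\,G_{S\cap V_\Delta}\,\rho_\Delta(c)^{-1}$ with $\rho_\Delta(c)\in G_\Delta$, and by Van der Lek's theorem $G_{S\cap V_\Delta}$ is the standard parabolic of $G_\Delta$ on $S\cap V_\Delta\subseteq V_\Delta$; conversely every $\Delta$-parabolic over $S'\subseteq V_\Delta$ is a $\Gamma$-parabolic over $S'$. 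Since $G_A\cap hG_Ah^{-1}\subseteq G_\Delta$, it is $\Delta$-parabolic iff it is $\Gamma$-parabolic, and by the second paragraph the latter holds iff $G_A\cap gG_Ag^{-1}$ is $\Gamma$-parabolic; this is (ii). For (iii): if $G_A\cap hG_Ah^{-1}$ is contained in a $\Delta$-parabolic over some $S'\subsetneq A$, then it is contained in that same subgroup viewed as a $\Gamma$-parabolic over $S'$, and conjugating by $z$ shows $G_A\cap gG_Ag^{-1}$ is contained in a $\Gamma$-parabolic over $S'\subsetneq A$; conversely, if $G_A\cap gG_Ag^{-1}\subseteq P'$ with $P'$ a $\Gamma$-parabolic over $S'\subsetneq A$, then conjugating by $z^{-1}$ and applying $\rho_\Delta$ — which fixes $G_A\cap hG_Ah^{-1}$ pointwise and carries $P'$ to a $\Delta$-parabolic over $S'\cap V_\Delta=S'$ (using $S'\subseteq A\subseteq V_\Delta$) — gives the required containment.

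I do not expect a genuine obstacle: the whole content is the identity $gG_Ag^{-1}=z\,hG_Ah^{-1}\,z^{-1}$ with $z\in N_{G_\Gamma}(G_A)$, together with the transfer between $\Gamma$- and $\Delta$-parabolicity. The only points needing care are the bookkeeping of the conjugating elements $t_0,z,f$ and the verification that the relevant defining subsets survive an application of $\rho_\Delta$, which works precisely because those subsets already lie in $V_\Delta$.
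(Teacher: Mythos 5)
Your proposal is correct and follows essentially the same route as the paper: both take $h=\rho_\Delta(g)$ (the paper's $h=f_1^{-1}f_2$ with $f_1=\rho_\Delta(t^{-1})$, $f_2=\rho_\Delta(t^{-1}g)$ equals $\rho_\Delta(g)$) and transfer all three properties through the retraction $\rho_\Delta$, using that a $\Gamma$-parabolic contained in $G_\Delta$ is $\Delta$-parabolic. Your repackaging via the single normalizing element $z=t\rho_\Delta(t)^{-1}$ with $gG_Ag^{-1}=zhG_Ah^{-1}z^{-1}$ is a slightly cleaner bookkeeping of the same argument.
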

\begin{proof}
Suppose that the inclusion $G_A\cup gG_Ag^{-1} \subseteq tG_\Delta t^{-1}$ holds. Multiplying by $t^{-1}$ we obtain $t^{-1}G_A t \cup t^{-1} gG_A g^{-1}t\subseteq G_\Delta$. Applying $\rho_\Delta$, and recalling that $A\subseteq V_\Delta$, we get: 
$$t^{-1}G_A t =  \rho_\Delta(t^{-1}) G_{A} \rho_\Delta(t)\text{  and  }t^{-1} gG_A g^{-1}t = \rho_{\Delta}( t^{-1} g)G_A \rho_{\Delta}(g^{-1}t).$$
Let $f_1=\rho_\Delta(t^{-1})$ and $f_2=\rho_\Delta (t^{-1} g)\in G_\Delta$. 
We have that 
$$t^{-1}G_A t \cap t^{-1} gG_A g^{-1}t =  f_1G_Af_1^{-1} \cap f_2G_A f_2^{-1}.$$
Observe that  $t^{-1}G_A t \cap t^{-1} gG_A g^{-1}t$ is $\Delta$-parabolic (resp. contained in a parabolic subgroup over a subset of $A$) if and only if $G_A\cap gG_Ag^{-1}$ is $\Delta$-parabolic (resp. contained in a parabolic subgroup over a subset of $A$).

We will take  $h=f_1^{-1}f_2\in G_\Delta$. Observe that  $f_1G_Af_1^{-1} \cap f_2G_A f_2^{-1}$ is $\Delta$-parabolic (resp. contained in a parabolic subgroup over a subset of $A$) if and only if $G_A\cap hG_Ah^{-1}$ is $\Delta$-parabolic (resp. contained in a parabolic subgroup over a subset of $A$).

We prove (i). Note that $G_A=gG_Ag^{-1}\Leftrightarrow t^{-1}G_A t = t^{-1}gG_Ag^{-1}t \stackrel{(*)}{\Leftrightarrow} \rho_{\Delta} (t^{-1}G_A t) = \rho_\Delta ( t^{-1}gG_Ag^{-1}t ) \Leftrightarrow f_1 G_A f_1^{-1} = f_2 G_A f_2^{-1} \Leftrightarrow G_A = hG_Ah^{-1}$. The equivalence $(*)$ uses that $t^{-1}G_At$ and $t^{-1}gG_A g^{-1}t$ are contained in $G_\Delta$.

To show (ii), by the previous discussion, it is enough to show that $t^{-1}G_A t \cap t^{-1} gG_A g^{-1}t$ is $\Gamma$-parabolic if and only if $f_1G_Af_1^{-1} \cap f_2G_A f_2^{-1}$ is $\Delta$-parabolic.

As $\Delta$  is a subgraph of $\Gamma$, any  $\Delta$-parabolic subgroup of $G_\Delta$  is a $\Gamma$-parabolic subgroup of $G_\Gamma$. Thus, if $f_1 G_Af_1^{-1}\cap f_2 G_A f_2^{-1}$ is $\Delta$-parabolic, then it is also $\Gamma$ parabolic.
Conversely, if  there  for some $B\subseteq G_\Gamma$ and some $d\in G_\Gamma$ such that,  $f_1G_Af_1^{-1}\cap f_2G_Agf_2^{-1}= d G_Bd^{-1}$. 
As $ f_1G_Af_1^{-1} \cap f_2G_A f_2^{-1}\subseteq G_\Delta$, applying $\rho_\Delta$,
and noting that $B\subseteq A \subseteq \Delta$
we get that $f_1G_Af_1^{-1}\cap f_2G_Af_2^{-1}=f_3G_{B}f_3^{-1}$ where 
$f_3=\rho_\Delta(d)$.

A similar argument as above shows (iii).
\end{proof}

Suppose that $\cC$ is a class of even Artin graphs closed by taking subgraphs, satisfying the following
\begin{equation}
\label{eq: base of induction}
\begin{multlined}
\text{ for all $\Gamma\in\cC, \, A\subseteq V,\, g\in G_\Gamma$ such that for all $x\in V-A, \, \st(x)=V$ one has that}\\ 
\text{  $G_A=gG_Ag^{-1}$ or $G_A\cap gG_Ag^{-1}\leqslant dG_Bd^{-1}$ for some $B\subsetneq A$ and $d\in G_\Gamma$.}
\end{multlined}
\end{equation}

\begin{prop}\label{prop: reduction to stars}
If $\cC$ is a class of even Artin graphs closed by taking subgraphs, and satisfying \eqref{eq: base of induction}, then for every $\Gamma\in \cC$ the intersection of two $\Gamma$-parabolic subgroups of $G_\Gamma$ is parabolic. 
\end{prop}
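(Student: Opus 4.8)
The plan is to reduce an arbitrary pair of $\Gamma$-parabolic subgroups to the special configuration governed by \eqref{eq: base of induction}, then argue by induction. By Lemma \ref{lem: intersection_of_two_parabolic_subgroups}, given two parabolic subgroups $fG_{A_0}f^{-1}$ and $gG_{B_0}g^{-1}$ of $G_\Gamma$, their intersection equals $faG_Ca^{-1}f^{-1}\cap gbG_Cb^{-1}g^{-1}$ where $C=A_0\cap B_0$, $a\in G_{A_0}$, $b\in G_{B_0}$; conjugating by $(fa)^{-1}$ we may assume the two subgroups are $G_A$ and $gG_Ag^{-1}$ for a common $A\subseteq V$ and some $g\in G_\Gamma$. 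So it suffices to show that $G_A\cap gG_Ag^{-1}$ is parabolic for every $A\subseteq V$ and $g\in G_\Gamma$. I would prove this by induction on $|A|$; the case $A=\emptyset$ is trivial and $|A|=1$ is immediate since then $G_A\cong\bZ$ and the intersection is either $G_A$ or $\{1\}$.

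For the inductive step, fix $A$ with $|A|\geq 1$ and assume the statement holds for all proper subsets of $A$ (and, since $\cC$ is closed under subgraphs, in all subgroups $G_\Delta$ with $\Delta$ a subgraph of $\Gamma$). Consider first the case where $G_A\cup gG_Ag^{-1}$ is contained in some proper standard parabolic $G_{V-\{x\}}$, $x\in V-A$ (more generally in some proper $\Gamma$-parabolic $tG_\Delta t^{-1}$). Here Lemma \ref{lem: reducing subgraphs} lets us replace the pair $(G_A, gG_Ag^{-1})$ by a pair $(G_A, hG_Ah^{-1})$ with $h\in G_\Delta$, transferring the question into $G_\Delta$; since $G_\Delta$ lies in $\cC$ and $|A|\leq|V_\Delta|<|V|$ is not what we need, we instead recurse on the structure: we keep shrinking the ambient graph while $A$ stays fixed, and this terminates because $\Gamma$ is finite, ultimately reaching a graph $\Delta$ in which $G_A\cup hG_Ah^{-1}$ is \emph{not} contained in any proper parabolic, while (by parts (ii),(iii) of Lemma \ref{lem: reducing subgraphs}) the conclusion in $\Delta$ pulls back to the conclusion in $\Gamma$.

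So we are reduced to the case that $G_A\cup gG_Ag^{-1}$ is not contained in any proper parabolic of $G_\Gamma$. If there is some $x\in V-A$ with $A\not\subseteq\lk(x)$, then Lemma \ref{lem: link exterior} gives that $G_A\cap gG_Ag^{-1}$ is contained in a parabolic $dG_Bd^{-1}$ with $B\subsetneq A$; writing $P=G_A\cap gG_Ag^{-1}=(G_A\cap dG_Bd^{-1})\cap(gG_Ag^{-1}\cap dG_Bd^{-1})$ and applying Lemma \ref{lem: intersection_of_two_parabolic_subgroups} twice expresses $P$ as an intersection of two parabolics each over a subset of $B\subsetneq A$, so the inductive hypothesis finishes this case. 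The remaining possibility is that for \emph{every} $x\in V-A$ one has $A\subseteq\lk(x)$, equivalently $\st(x)=V$ for every $x\in V-A$ — wait, $A\subseteq\lk(x)$ says every vertex of $A$ is joined to $x$, not that every vertex of $V$ is; one must also handle vertices of $V-A$. If some $x\in V-A$ has $\st(x)\neq V$, pick $y\notin\st(x)$: if $y\in A$ we are in the previous case, and if $y\in V-A$ then $G_A\cup gG_Ag^{-1}\subseteq G_{V-\{y\}}$ (both $x$-free is irrelevant; rather both $A\subseteq V-\{y\}$ since $y\notin A$), so — the subtlety here is checking that $g$ can be taken in $G_{V-\{y\}}$ using $A\subseteq\lk(y')$ type arguments, which needs care. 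Once $\st(x)=V$ for all $x\in V-A$ is genuinely achieved, hypothesis \eqref{eq: base of induction} applies directly: either $G_A=gG_Ag^{-1}$ (parabolic, done) or $G_A\cap gG_Ag^{-1}\leqslant dG_Bd^{-1}$ with $B\subsetneq A$, and we conclude by the inductive hypothesis exactly as above.

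The main obstacle I anticipate is the bookkeeping in the reduction step: showing that after repeatedly applying Lemma \ref{lem: reducing subgraphs} and Lemma \ref{lem: link exterior} one genuinely lands in the situation where $\st(x)=V$ for all $x\in V-A$, with a well-founded induction (on $|A|$ nested inside a descent on the ambient graph) that does not loop. Making precise why "not contained in a proper parabolic" together with "$\st(x)=V$ for all $x\notin A$" is exactly the base-case hypothesis \eqref{eq: base of induction}, and verifying the hypotheses of Lemma \ref{lem: link exterior} (that $G_A\cup gG_Ag^{-1}$ is not in a proper parabolic) are met after the reduction, is where the care is needed; the rest is a routine assembly of the cited lemmas.
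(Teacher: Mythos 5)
Your reduction steps match the paper's: Lemma \ref{lem: intersection_of_two_parabolic_subgroups} to pass to $G_A\cap gG_Ag^{-1}$, Lemma \ref{lem: reducing subgraphs} to descend to a subgraph until $G_A\cup gG_Ag^{-1}$ lies in no proper parabolic, Lemma \ref{lem: link exterior} plus induction on $|A|$ when some $x\in V-A$ has $A\not\subseteq\lk(x)$. But you have correctly identified, and then not closed, the real gap: after these reductions you only know $A\subseteq\lk(x)$ for every $x\in V-A$, which is strictly weaker than the hypothesis $\st(x)=V$ required by \eqref{eq: base of induction}, since two vertices $x,y\in V-A$ may fail to be adjacent. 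Your attempted patch --- putting $G_A\cup gG_Ag^{-1}$ inside $G_{V-\{y\}}$ --- cannot work as stated: $G_A\subseteq G_{V-\{y\}}$ holds, but $gG_Ag^{-1}\subseteq G_{V-\{y\}}$ generally fails, and indeed you have already reduced to the case where $G_A\cup gG_Ag^{-1}$ is contained in \emph{no} proper parabolic, so this containment is impossible by assumption. There is no way to "take $g$ in $G_{V-\{y\}}$" at this stage.

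The missing idea is a second, nested induction on $N=\sharp\{x\in V-A:\st(x)\neq V\}$. When $N>0$, one picks non-adjacent $x,y\in V-A$, forms the splitting $G_\Gamma=G_{\st(x)}*_{G_{\lk(x)}}G_{V-\{x\}}$, and looks at the geodesic in the Bass-Serre tree between the edges $G_{\lk(x)}$ and $gG_{\lk(x)}$. Since $A\subseteq\lk(x)$, the group $G_A\cap gG_Ag^{-1}$ stabilizes every edge of this geodesic, which (after adjusting the coset representatives $g_i$ by elements of $G_{\lk(x)}$ via Lemma \ref{lem: intersection_of_two_parabolic_subgroups}) rewrites the intersection as a chain $G_A\cap g_1G_Ag_1^{-1}\cap\dots\cap gG_Ag^{-1}$ in which each consecutive quotient $g_i^{-1}g_{i+1}$ lies in $G_{\st(x)}$ or in $G_{V-\{x\}}$. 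In each of those two subgraphs the parameter $N$ strictly decreases, so induction on $N$ applies to each consecutive intersection, and either all of them equal $G_A$ or one lands in a parabolic over a proper subset of $A$, returning you to the induction on $|A|$. Without this step the proof does not reach the situation where \eqref{eq: base of induction} is applicable, so as written the proposal is incomplete at its central point.
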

\begin{proof}
Let $\Gamma\in \cC$ and let $P,Q$ be two parabolic subgroups of $G_\Gamma$. 
By Lemma \ref{lem: intersection_of_two_parabolic_subgroups} we can suppose that there is $A\subseteq V$ and $h_1,h_2\in G_\Gamma$ such that $P\cap Q = h_1 G_Ah_1^{-1} \cap h_2 G_A h_2^{-1}$.
Therefore $P\cap Q$ is parabolic if and only if $G_A\cap gG_Ag^{-1}$ is parabolic, where $g=h_1^{-1}h_2$.

If $G_A\cup gG_Ag^{-1}$ is contained in a proper parabolic subgroup of $G_\Gamma$,  then by Lemma \ref{lem: reducing subgraphs} we can replace $\Gamma$ by a proper subgraph $\Delta$ and replace $g$ by some $h\in G_\Delta$.
Note that $\Delta$ is still in the class $\cC$.

Therefore, we can return to the initial notation  and further assume that hat $G_A\cup gG_Ag^{-1}$ is not contained over a proper parabolic of $G_\Gamma$.

We will show that for every $A\subseteq V$ finite and any $g\in G_\Gamma$, $G_A\cap g G_Ag^{-1}$ is parabolic.
Our proof is by induction on $|A|$. 
If $|A|=0$, then $G_A$ is trivial and the result follows.
We now assume that $|A|>0$ and that for parabolic subgroups over smaller sets the result holds.
We remark that the induction hypothesis is equivalent to saying that for any $B\subseteq V$, $|B|<|A|$ and any $g_1,g_2\in G_\Gamma$, $g_1G_Bg_1^{-1} \cap g_2G_Bg_2^{-1}$ is parabolic. 

If there is $x\in V-A$ such that $A$ is not contained in $\lk(x)$, then Lemma \ref{lem: link exterior} implies that $G_A\cap gG_Ag^{-1}\leqslant dG_Bd^{-1}$ for some $B\subsetneq A$ and some $d\in G_\Gamma$.
Therefore, by Lemma \ref{lem: intersection_of_two_parabolic_subgroups} there are $a,a'\in G_A$ and $b,b'\in G_B$ such that 
\begin{align*}
G_A\cap gG_Ag^{-1} &= G_A\cap gG_Ag^{-1} \cap d G_B d^{-1}\\
& = (G_A \cap d G_B d^{-1}) \cap (gG_Ag^{-1} \cap d G_B d^{-1})\\
& = (aG_{A\cap B}a^{-1} \cap d bG_{A\cap B} b^{-1} d^{-1})\cap (ga'G_{A\cap B} {a'}^{-1}g^{-1} \cap db'G_{A\cap B}(b')^{-1} d^{-1} )\\
& = (aG_{B}a^{-1} \cap d G_{B} d^{-1})\cap (ga'G_{B} {a'}^{-1}g^{-1} \cap dG_{ B} d^{-1} ).
\end{align*}
As $|B|<|A|$, by induction, $aG_{B}a^{-1} \cap d G_{B} d^{-1}$ and $aG_{B}a^{-1} \cap d G_{B} d^{-1}$ are parabolic subgroups of $G_\Gamma$ over subsets of $B$. 
Say that $aG_{B}a^{-1} \cap d G_{B} d^{-1} = g_1 G_{B_1} g_1^{-1}$ and $aG_{B}a^{-1} \cap d G_{B} d^{-1} = g_2 G_{B_2} g_2^{-1}$.
Thus, using again Lemma \ref{lem: intersection_of_two_parabolic_subgroups}, we get that $$G_A\cap gG_Ag^{-1}  =  g_1 G_{B_1} g_1^{-1} \cap  g_2 G_{B_2} g_2^{-1} = g_1xG_C x^{-1}g_1^{-1}\cap g_2yG_Cy^{-1}g_2^{-1}$$
where $x\in G_{B_1}$, $y\in G_{B_2}$ and $C=B_1\cap B_2$. As $|C|<|A|$, using again induction, we get that $G_A\cap gG_Ag^{-1}$ is parabolic.

So, we assume that for all $x\in V-A$, $A\subseteq \lk(x)$.
We now argue by induction on $N=\sharp \{x\in V-A : \st(x)\neq V\}$.
In the case $N=0$, as we are in the class $\cC$ that satisfies \eqref{eq: base of induction}, we have that either $G_A=gG_Ag^{-1}$ and  hence $G_A\cap gG_Ag^{-1}$ is parabolic, or  $G_A\cap gG_Ag^{-1}\leqslant d G_B d^{-1}$ for some $B\subsetneq A$. 
As before, the latter implies that $G_A\cap gG_Ag^{-1}$ is an intersection of four parabolics over $B$, with $|B|<|A|$ and arguing as above, we get that $G_A\cap gG_Ag^{-1}$ is parabolic.
So assume that $N>0$ and the result is known for smaller values of $N$ and $A$.

Let $x,y\in V-A$ not linked by an edge.  Let $X=\st(x)$, $Y=V-\{x\}$ and $Z=\lk(x)$.
Then we have an amalgamated free product:
$$G = G_X*_{G_Z} G_{Y}$$
and consider the associated Bass-Serre tree $T$ corresponding to this splitting.

Consider the edges $G_Z$ and $gG_Z$ on $T$. 
Let $G_Z, g_1G_Z, \ldots, g_nG_Z, gG_Z$ be sequence of edges in the unique geodesic in $T$ connecting $G_Z$ and $gG_Z$.
If $G_Z = gG_Z$ (i.e $n=0$) and taking into account that $A\subseteq Z$, we have that  $G_A\cup g G_Ag^{-1}$ is contained in $G_Z$, which is a proper parabolic of $G_\Gamma$ and this contradicts our hypothesis.
So we assume that $n\geq 1$. 
By the construction of $T$, one has either $g_i^{-1}g_{i+1} \in G_{X}$ or $g_i^{-1}g_{i+1} \in G_{Y}$, for any~$i = 0, \ldots, n$ where $g_0 = 1$ and $g_{n+1} = g$. 
The intersection $G_A\cap gG_A g^{-1}$ stabilizes the endpoints of the geodesic path, hence it stabilizes the whole path.
As the stabilizer of a geodesic is the intersection of stabilizers of its edges, we have the equality
$$G_A\cap gG_A g^{-1} = G_A\cap g_1G_Z g_1^{-1} \cap \ldots \cap g_nG_Zg_n^{-1}\cap gG_A g^{-1}.$$
By Lemma \ref{lem: intersection_of_two_parabolic_subgroups}, (applied to $G_A\cap g_i G_Zg_i^{-1})$, and the fact that $A\subseteq Z$ we have that there are $z_i\in G_Z$ such that $G_A\cap g_iG_Zg_i^{-1}$ are equal to $G_A\cap g_iz_i G_A z_i^{-1}g_i^{-1}$. 

Note that $(g_iz_i)^{-1}(g_{i+1}z_{i+1}) = z_i^{-1}(g_i^{-1}g_{i+1})z_{i+1}$, so replacing $g_iz_i$ by $g_i$ we still have that $g_i^{-1}g_{i+1} \in G_{X}$ or $g_i^{-1}g_{i+1} \in G_{Y}$, for any~$i = 0, \ldots, n$ where $g_0 = 1$ and $g_{n+1} = g$. 
Hence:
$$G_A\cap gG_A g^{-1} = G_A\cap g_1G_Ag_1^{-1} \cap \ldots \cap g_nG_Ag_n^{-1}\cap gG_A g^{-1}.$$

The intersections $g_iG_Ag_i^{-1} \cap g_{i+1}G_Ag_{i+1}^{-1}$ can be expressed as:
$$g_iG_Ag_i^{-1} \cap g_{i+1}G_Ag_{i+1}^{-1} = g_i[G_A \cap g_i' G_Ag_i'^{-1}]g_i^{-1},$$
where $g_i' = g_i^{-1}g_{i+1} $ is either in $G_{X}$, or in $G_{Y}$. 
As the number of vertices in $X-A$ (resp. $Y-A$) whose star is not $X$ (resp. $Y$) is less than $N$, we can apply induction and the intersections $G_A \cap g_i' G_Ag_i'^{-1}$ are either equal to $G_A$ or are contained in a parabolic subgroup over a proper subset of $B$ of $A$.
If we have equality for $i=1,\dots, n$, then $G_A\cap g G_Ag^{-1}=G_A$, in other case, $G_A\cap g G_Ag^{-1}$ is contained in a parabolic subgroup over a proper subset $B$ of $A$ and we conclude by induction on $|A|$.
\end{proof}

\begin{cor}
Let $\Gamma$ be right-angled Artin  graph. 
The intersection of two parabolic subgroups of $G_\Gamma$  is parabolic.
\end{cor}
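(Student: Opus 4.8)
The plan is to invoke Proposition \ref{prop: reduction to stars}, taking for $\cC$ the class of right-angled Artin graphs. Such graphs are even (every label equals $2\in 2\bN$) and the class is plainly closed under passing to induced subgraphs, so all that needs checking is that $\cC$ satisfies condition \eqref{eq: base of induction}.

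So I would fix a right-angled Artin graph $\Gamma=(V,E,m)$, a subset $A\subseteq V$, and $g\in G_\Gamma$, and assume that $\st(x)=V$ for every $x\in V-A$. The first observation is what this hypothesis means in a RAAG: for $x\in V-A$ the equality $\st(x)=V$ says $x$ is joined by an edge to every other vertex, hence $x$ commutes with every generator of $G_\Gamma$ and is therefore central; and any two vertices of $V-A$ are joined by an edge, so $G_{V-A}=\gen{V-A}\cong\bZ^{|V-A|}$. Together with $G_A\cap G_{V-A}=G_{A\cap(V-A)}=\{1\}$ (Lemma \ref{lem: intersection_of_standard_parabolic_subgroups}), this yields a direct product decomposition
$$G_\Gamma=G_A\times\gen{V-A}$$
in which the second factor is central.

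Now the key step is immediate: writing $g=at$ with $a\in G_A$ and $t\in\gen{V-A}$, centrality of $t$ and the fact that $a\in G_A$ give $gG_Ag^{-1}=at\,G_A\,t^{-1}a^{-1}=aG_Aa^{-1}=G_A$. Thus the first alternative of \eqref{eq: base of induction} holds (in fact for every such $g$), so $\cC$ satisfies \eqref{eq: base of induction} and Proposition \ref{prop: reduction to stars} applies, giving the corollary. There is no real obstacle in this argument: the substance of the statement is entirely carried by Proposition \ref{prop: reduction to stars}, and the base case \eqref{eq: base of induction} for RAAGs collapses because the hypothesis that every vertex of $V-A$ is a cone point forces those vertices to be central, so that a conjugate of $G_A$ cannot move at all. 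The only point to handle with a little care is verifying that $\gen{V-A}$ is both abelian and central, which uses the commutation relations of the RAAG in both directions (each such vertex commutes with all of $V$, and any two such vertices commute with each other).
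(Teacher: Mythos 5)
Your proposal is correct and follows essentially the same route as the paper: both verify condition \eqref{eq: base of induction} for the class of right-angled Artin graphs by observing that the hypothesis $\st(x)=V$ for all $x\in V-A$ forces the direct product decomposition $G_\Gamma=G_A\times G_{V-A}$, whence $gG_Ag^{-1}=G_A$, and then invoke Proposition \ref{prop: reduction to stars}. Your additional remark that $\gen{V-A}$ is central is a harmless strengthening of what is needed.
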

\begin{proof}
Let $\cC$ be the family of finite right-angled Artin graphs. 
Clearly $\cC$ is closed under subgraphs.
Now take $A\subseteq V,\, g\in G_\Gamma$ such that for all $x\in V-A, \, \st(x)=V$.
Then $G_\Gamma$ is a direct product of $G_A$ and $G_{V-A}$ and thus 
 $G_A=gG_Ag^{-1}$. 
 Then $\cC$ satisfies \eqref{eq: base of induction} and the corollary follows from Proposition \ref{prop: reduction to stars}.
\end{proof}

\section{Even FC-Artin labelling and kernels}
\label{sec: kernels}
Through this section $\Gamma =(V,E,m)$ is an even Artin graph of FC-type.

Let $x\in V$.  In this section we describe the kernels of the retractions $\rho_{\{x\}}$ and $\rho_{V-\{x\}}$.
The kernel of $\rho_{V-\{v\}}$ was described in \cite{BMP}, and turns out to be a free group, we will just recall their result.
Our main contribution in this section is showing that $\ker \rho_{\{x\}}$ is isomorphic to an even FC-type Artin group $G_\Delta$ when $\st(x)=V$. 
The construction of the Artin graph $\Delta$ and the isomorphism will be explicit and will allow us in the next section to show that certain $\Delta$-parabolic subgroups of $G_\Delta$ are also $\Gamma$-parabolic (as subgroups of $G_\Gamma$).

\subsection{Kernel of a retraction onto a vertex }\label{subsec: kernel vertex}
Let $x\in V$  and $\rho\coloneqq \rho_{\{x\}}\colon G_{\Gamma} \to \langle x \rangle$ the associated retraction.
We assume that at least one of the following holds:
\begin{enumerate}
\item[(a)] $\st(x)=V$,
\item[(b)] for all $u\in L=\lk(x)$, $m_{u,x}=2$.
\end{enumerate}

We will see that under one of the previous conditions
\footnote{In fact, with  hypothesis (b), we do not use that $\Gamma$ is of FC-type} $K=\ker \rho_{x}$ is isomorphic to $G_\Delta$, 
where $\Delta= (V_\Delta, E_\Delta, m^{\Delta})$ is an even FC-type Artin graph.
Moreover, $V_\Delta$ will come with an indexing: $i\colon V_\Delta \to \bZ$.
We will say that $P\leqslant G_\Delta$ is {\it index parabolic} (with respect to $i$) 
if there is $n\in \bZ$, $S\subseteq i^{-1}(n)$ and $g\in G_\Delta$ such that $P= gG_Sg^{-1}$.

Let $L=\lk(x)\subseteq V$ and $B=V-(\st(x))$.
For $u\in L$, let $k_u= m_{u,x}/2$.
Let $\Delta$ be the graph with vertex set 
$$V_\Delta = \left(\bigcup_{u\in L}\{u\}\times \{0,1,\dots, k_u-1\}\right) \cup \left(\bigcup_{u\in B}\{u\}\times \bZ\right).$$
We define the indexing $i\colon V_\Delta \to \bZ$ as $i(v,n)=n$.
For simplicity, we write a vertex $(v,n)$ as $v_n$.
For future use, we set the following terminology. A vertex $v_i\in V_\Delta$ will be called {\it of type }$v\in V$ and of {\it index } $i$.

The edge set of $\Delta$ is 
$$E_\Delta = \{\{u_n,v_m\}\} : u_n,v_m\in V_\Delta, \{u,v\}\in E\}.$$
That is, there is an edge between $u_n$ and $v_m$ in $\Delta$ if and only if there is and edge between $u$ and $v$ in $\Gamma$. 
Moreover, the label $m^{\Delta}_{u_n,v_m}$ of $\{u_n, v_m\}$ is the same as the label $m_{u,v}$ of $\{u,v\}$.

The labelling $m^\Delta$ of $E_\Delta$ is, by definition, even.
It is also of $FC$-type. Indeed, we need to verify that any three vertices of $\Delta$ spanning a complete graph satisfy that at most one of the labels of the edges is greater than $2$. 
As there are no edges in $\Delta$ among vertices of the same type, if $u_n,v_m,w_l\in V_\Delta$ span a complete graph,  we must have that $u,v,w$ are three different vertices of $\Gamma$ and $n,m,l\in \bZ$. 
As $m^{\Delta}_{u_n,v_m}= m_{u,v}$, $m^{\Delta}_{v_m, w_l} = m_{v,w}$ and  $m^{\Delta}_{w_l,u_n}= m_{w,u}$ and $\Gamma$ is even FC-type, we get at most one of the $m^{\Delta}_ {u_n,v_m}, m^{\Delta}_{v_m,w_l}, m^{\Delta}_{w_l, u_n}$ is greater than 2.

\begin{lem}With the previous notation, 
$G_\Delta \cong \ker \rho$ via $v_n\mapsto x^nvx^{-n}$.
\end{lem}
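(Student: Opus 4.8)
The strategy is to apply the Reidemeister–Schreier method to the retraction $\rho\colon G_\Gamma\to\langle x\rangle\cong\bZ$, using the transversal $\{x^n : n\in\bZ\}$ for $K=\ker\rho$, and then to simplify the resulting presentation until it matches the presentation of $G_\Delta$. First I would record that $\{x^n\}_{n\in\bZ}$ is a Schreier transversal for $K$ in $G_\Gamma$, so that $K$ is generated by the elements $x^n v x^{-(n+\text{(exponent of }x\text{ in the rewriting)})}$; since $\rho(v)=0$ for every $v\in V-\{x\}$ and $\rho(x)=1$, the Reidemeister–Schreier generators are exactly $y_{v,n}\coloneqq x^n v x^{-n}$ for $v\in V-\{x\}$ and $n\in\bZ$ (the generator coming from $x$ itself is trivial in $K$). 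The defining relators of $K$ are the rewrites of $x^n r x^{-n}$ as $r$ runs over the Artin relators $\mathrm{prod}(u,v,m_{u,v})=\mathrm{prod}(v,u,m_{u,v})$ of $G_\Gamma$. The map $\phi\colon G_\Delta\to K$, $v_n\mapsto y_{v,n}$, is then well-defined once I check the $\Delta$-relators are consequences of the $K$-relators, and it is surjective because the $y_{v,n}$ generate $K$; so the content is an \emph{isomorphism} claim, and I will get it by showing the rewritten relators of $K$ are precisely (equivalent to) the $\Delta$-relators.

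The main computation is the rewrite of a single Artin relator $r_{u,v}$. There are two cases according to the hypotheses. Under hypothesis (a), $\st(x)=V$, so $B=\varnothing$; every $u\in V-\{x\}$ lies in $L=\lk(x)$, and $x$ commutes or braids with $u$. Here I would use that for $u\in L$ the relator between $x$ and $u$ gives $x^{k_u} u x^{-k_u}$ expressed in terms of $u, xux^{-1},\dots,x^{k_u-1}ux^{-(k_u-1)}$ (this is the standard even-label identity: $\mathrm{prod}(x,u,2k_u)=\mathrm{prod}(u,x,2k_u)$ rearranges to $(xux^{-1})(x^2ux^{-2})\cdots = u(x u x^{-1})\cdots$, which lets one eliminate $y_{u,n}$ for $n\notin\{0,\dots,k_u-1\}$, matching the restricted index range in $V_\Delta$); conjugating this relation by $x^n$ shows $y_{u,n+k_u}$ is determined by $y_{u,n},\dots,y_{u,n+k_u-1}$, so only the claimed generators survive. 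Under hypothesis (b), all $m_{u,x}=2$, so $x$ is central, $y_{u,n}=u$ for all $n$, the index set collapses to $\{0\}$ on $L$, and again $B=V-\st(x)$ carries the full $\bZ$-indexing with no constraint. In both cases, for an edge $\{u,v\}\in E$ with $u,v\neq x$, conjugating $r_{u,v}$ by $x^n$ and rewriting gives exactly $\mathrm{prod}(u_n,v_n,m_{u,v})=\mathrm{prod}(v_n,u_n,m_{u,v})$ (the $x$-conjugation is uniform on both $u$ and $v$, so indices stay equal, which is why $E_\Delta$ only joins vertices of the same index) — this is the defining $\Delta$-relator. Relators involving $x$ are consumed in producing the index-reduction on $L$-vertices and contribute nothing further.

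The main obstacle, and the step deserving real care, is the bookkeeping that the $x$-involving relators do exactly two things and no more: (1) eliminate the redundant generators $y_{u,n}$ for $u\in L$, $n\notin\{0,\dots,k_u-1\}$, and (2) impose \emph{no} relation among the surviving generators beyond what is already in $G_\Delta$. Concretely I must verify that after using the braid relation between $x$ and $u$ to solve for $y_{u,n}$ in the out-of-range indices, back-substitution into the other rewritten relators reproduces precisely the $\Delta$-relators and introduces nothing new — i.e. the presentation of $K$ Tietze-transforms cleanly onto that of $G_\Delta$. I would carry this out by writing the braid relation $\mathrm{prod}(x,u,2k_u)=\mathrm{prod}(u,x,2k_u)$ explicitly, solving for $y_{u,k_u}$ in terms of $y_{u,0},\dots,y_{u,k_u-1}$, noting the relation is equivariant under conjugation by $x$ so it simultaneously handles all out-of-range $n$, and then observing that since in $G_\Gamma$ the only relators involving $x$ are these braid relations (one per edge at $x$), nothing else constrains the $y_{u,n}$. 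Then $\phi$ has an inverse $v\mapsto y_{v,0}$ (sending $u_n\mapsto y_{u,n}$), and the two presentations agree, giving $G_\Delta\cong K$ via $v_n\mapsto x^n v x^{-n}$ as claimed.
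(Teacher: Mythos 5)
Your overall strategy is the same as the paper's (Reidemeister--Schreier over the transversal $\{x^n\}$, then Tietze elimination of the out-of-range generators via the $x$-braid relators), but there are two genuine problems in the execution.

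First, you have misread the target presentation: in the paper's definition, $E_\Delta$ contains $\{u_n,v_m\}$ for \emph{all} pairs of indices $n,m$ whenever $\{u,v\}\in E$, not only for $n=m$. Your parenthetical ``indices stay equal, which is why $E_\Delta$ only joins vertices of the same index'' is false, and the discrepancy is not cosmetic. Take $a,b\in L$ with $k_a>1$, $k_b=1$ and $m_{a,b}=2$. The rewritten relators $a_jb_j=b_ja_j$ become $a_jb_0=b_0a_j$ for every $j$ (since $b_j=b_0$), and after eliminating $a_j$ for $j\notin\{0,\dots,k_a-1\}$ one must \emph{retain} the cross-index relators $a_ib_0=b_0a_i$ for $i=1,\dots,k_a-1$; these are exactly the edges $\{a_i,b_0\}$ of $\Delta$ with $i\neq 0$. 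Your claimed presentation, with only same-index relators, would omit them and present a strictly larger group than $K$.

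Second, the step you correctly single out as ``deserving real care''---that back-substitution of $a_l=\sigma_a^{-q}a_r\sigma_a^{q}$ into the non-$x$ relators introduces nothing beyond the $\Delta$-relators---is precisely where the FC hypothesis enters, and you never invoke it. If $\{a,b\}\in E$ with $a,b\in L$, the triangle $a,b,x$ forces (via FC) that at most one of $k_a,k_b$ exceeds $1$, and that if $m_{a,b}>2$ then $k_a=k_b=1$. Without this, back-substitution into a relator $(a_jb_j)^k=(b_ja_j)^k$ with $k>1$ and $k_a>1$ would produce words like $\bigl(\sigma_a^{q}a_s\sigma_a^{-q}\,\sigma_b^{q'}b_{s'}\sigma_b^{-q'}\bigr)^k=\cdots$ that are not Artin relators at all, and the lemma would fail. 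The paper's proof is exactly the case analysis you deferred: it checks that in every configuration permitted by FC the surviving relators are the $\Delta$-relators (including the cross-index commutations) and that the out-of-range instances are consequences of the in-range ones. You need to carry out that analysis, against the correct definition of $E_\Delta$, for the proof to be complete.
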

\begin{proof}
We use the Reidemeister-Schreier procedure (see \cite{LyndonSchupp}) to obtain a presentation of~$K$. 
Write $V = B\sqcup L \sqcup \{x\}$. 
So the retraction map is given by:
$$\rho \colon G_{\Gamma} \to \bZ, \quad \forall a\in B\cup L: a\mapsto 0, \; x\mapsto 1,$$
with $K = ker(\rho)$.

The set $T = \{x^{i} \mid i \in \bZ\}$ gives a Schreier transversal for $K$ in $G_\Gamma$.
The set of generators for $K$ is
$Y = \{tv(\overline{tv})^{-1}\mid t\in T,\; v\in V,\; tv \not\in T \}$ where $\overline{w}$ is the representative of $w$ in $T$. 
Let us compute the set $Y$. 
For $v=x$, $x^{i}x (\overline{x^{i}x})^{-1} =1$. 
For  $v=a$ with  $a\in B\cup L$, let $a_i \coloneqq  x^{i}a(\overline{x^{i}a})^{-1} = x^{i}ax^{-i}$.
Therefore we get that the set
$$Y = \{a_i \coloneqq x^{i}ax^{-i}\mid a\in L \cup B, \; i \in \bZ\},$$
gives a set of generators for $K$.

Denote by $R$ the set of relations of the defining presentation of $G_\Gamma$. To obtain relations for~$K$, rewrite each $trt^{-1}$ for $t\in T$ and $r \in R$ using generators in~$Y$.

Write any $t\in T$ as $x^i$ for some $i\in \bZ$. 
We divide the relations in $R$ in two types:
\begin{itemize}
\item[(i)] relations involving only elements of $L\cup B$: i.e. of the form $r = (ab)^m (ba)^{-m}$ where $a,b\in L\cup B$,
\item[(ii)] relations involving $x$: i.e. of the form  $r = (ax)^{k_a}(xa)^{-k_a}$
with $a\in L$. 
\end{itemize}
In case (i) we have $trt^{-1} = x^i((ab)^m (ba)^{-m})x^{-i}$. 
Introducing $x^ix^{-i}$ between letters, and recalling that $a_i = x^{i}ax^{-i}$ we obtain:
$$trt^{-1} = (a_i b_i)^m (b_i a_i)^{-m}
$$which is an even Artin relation, for the pair $a_i,b_i$ for all $i\in \bZ$, with the same label as the Artin relation for the pair $a,b$.
    
In case (ii) we have $trt^{-1} = x^i((ax)^{k_a}(xa)^{-k_a})x^{-i}$. 
Again we put $x^ix^{-i}$ between letters, and use $a_i = x^{i}ax^{-i}$ to obtain:
$$trt^{-1} = a_i a_{i+1} \dots a_{i+k_a-1}(a_{i+1} a_{i+2} \dots a_{i+k_a})^{-1}.
$$ 
The presentation for $K$ is given as:
$$K = \langle Y \mid S \rangle,$$
where $Y = \{a_{i} = x^{i}ax^{-i}\mid a\in L\cup B, \; i \in \bZ\}$, and the relations are described as below:
\begin{itemize}
\item[(i)] if $a,b\in L\cup B$ satisfy $(ab)^m = (ba)^{m}$, then for all $i\in \bZ$: $(a_{i}b_{i})^m  = (b_{i}a_{i})^{m}$
\item[(ii)] if $a\in L$ and $x$ satisfy $(ax)^{k_a} = (xa)^{k_a}$ then for all $i\in \bZ$:
$a_{i}a_{i+1}\dots a_{i+k_a-1} = a_{i+1} a_{i+2} \dots a_{i+k_a}$.
\end{itemize}

We can use the type (ii) relations to simplify our presentation. 
If $a\in L$ and $x$ satisfy $(ax)^{k_a} = (xa)^{k_a}$, then any $a_i$ is a product of $a_{0}, a_{1}, \dots,  a_{k_a-1}.$ 
Indeed, setting $\sigma_{a} = a_{0} a_{1} \cdots  a_{k_a-1},$
we obtain:
\begin{equation}\label{eq: conjugates of a by x}
a_{l} = \sigma_a^{-q} a_{r} \sigma_a^{q},
\end{equation} 
where $l = k_a\cdot q + r$ with $0 \leq r < n$. 
Observe that if $k_a=1$ then $\sigma_a= a_0$ and $a_l=a_0$ for all $l$.

We can use Tietze transformations to eliminate all generators $a_i$, $i\not\in \{0, 1,\dots, k_a-1\}$ and the relations of type (ii).
We obtain a new presentation with generating set
$$V_\Delta = \{a_{j} \mid a\in \lk(x), \; 0 \leq j \leq k_a - 1 \text{ in } \bZ\} \cup \{b_j \mid b\in B, j\in \bZ\}.$$ 
To future use, we set the following terminology.

We need to examine what happens with  relations in case (i). 
Let us examine what is the effect of the previous Tietze transformations on  $r= (a_jb_j)^k=(b_ja_j)^k$.
 We have several cases. Note that if $B\neq \emptyset$, then we are under hypothesis (b):
\begin{enumerate}
\item[(i)] $a,b\in B$.
In this case, $r$ is unaltered under the Tietze transformations as none of the generators involved is eliminated.
\item[(ii)] $a\in L$, $b\in B$.
This case only can happen if we are in case (b) and thus $k_a=1$ and we have that $a_i=a_0$ for all $i\in \bZ$. Thus $r$ becomes $(a_0b_j)^k = (b_ja_0)^k$.
\item[(iii)] $a,b\in L$. We have here several subcases.
\begin{itemize}
\item under hypothesis (b):  we have that $k_a=k_b=1$ and then $r$ becomes $(a_0b_0)^k = (b_0a_0)^k$.
\item under hypothesis (a): if $k>1$, then because of the FC-condition , $k_a=k_b=1$ and then $r$ becomes $(a_0b_0)^k = (b_0a_0)^k$.
\item under hypothesis (a): if $k=1$, then because of the FC-condition, at least one of $k_a$ and $k_b$ is equal to 1. 
If both are equal to 1, then $R$ becomes $a_0b_0 = b_0a_0$.
If, say $k_a>1$, then $r$ becomes $\sigma_a^q a_s \sigma_a^{-q} b_0 = b_0 \sigma_a^q a_s \sigma_a^{-q}$  where $j = k_a\cdot q + s$ with $0 \leq s < k_a$. 
Note that for $j\notin \{0,1,\dots, k_a-1\}$, $r$ is a consequence of $a_0b_0=b_0a_0, \dots, a_{k_a-1}b_0 = b_0a_{k_a-1}$ and thus those relations can be eliminated.
\end{itemize}
\end{enumerate}

It is straightforward to check that the presentation that we obtain is the presentation of the Artin group $G_\Delta$ with $\Delta$ given above.
\end{proof}

Assume that $B = \emptyset$. Since the relations in $K$ come from the relations between elements of $L$, we obtain immediately the following corollary.

\begin{cor}
If $G_L$ is free and $B = \emptyset$, then the kernel $K$ is free as well, on $\sum_{a \in A} k_a$ generators, where $2k_a$ is the label of the edge in $\Gamma$ for the pair $x,a$ with $a\in L$.
\end{cor}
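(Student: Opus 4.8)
The plan is to read the corollary off from the preceding lemma together with one elementary fact about two-generator Artin groups. First I would note that $B=\emptyset$ is exactly the hypothesis $\st(x)=V$, so hypothesis (a) holds and the lemma gives an isomorphism $K\cong G_\Delta$ sending $a_j$ to $x^jax^{-j}$. Since $B=\emptyset$, the second part of $V_\Delta$ disappears and
\[
V_\Delta=\bigcup_{a\in L}\{a\}\times\{0,1,\dots,k_a-1\},
\]
so $|V_\Delta|=\sum_{a\in L}k_a=\sum_{a\in L}m_{a,x}/2$, which is the claimed number of generators.

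The substance of the argument is then to check that the graph $\Delta$ has no edges. Every vertex of $\Delta$ has type in $L=\lk(x)$, and by definition $E_\Delta$ contains a pair $\{a_n,b_m\}$ only if $\{a,b\}\in E$; thus an edge of $\Delta$ would give an edge $\{a,b\}$ of $\Gamma_L$ with some even label $m_{a,b}=2k\ge 2$. By Van der Lek's theorem \cite{VdL} the standard parabolic $G_{\{a,b\}}=\langle a,b\mid (ab)^k=(ba)^k\rangle$ embeds in $G_L$, but $G_{\{a,b\}}$ is not free: its abelianization is $\bZ^2$, and $(ab)^k$ is central, so $\langle a,(ab)^k\rangle\cong\bZ^2$, which cannot embed in a free group. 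This contradicts the freeness of $G_L$, so $\Gamma_L$ is edgeless, whence $E_\Delta=\emptyset$ and $G_\Delta$ is the free group of rank $|V_\Delta|=\sum_{a\in L}k_a$. Carrying this back along the isomorphism of the lemma shows $K$ is free of that rank.

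I do not expect a real obstacle here — the corollary is, as the authors say, immediate. The only non-bookkeeping point is the implication ``$G_L$ free $\Rightarrow$ $\Gamma_L$ has no edge'', namely that a dihedral Artin group of even type is never free; I would dispose of it by the $\bZ^2$-subgroup observation above, or equivalently by recalling that $G_{\{a,b\}}$ is non-cyclic (its abelianization is $\bZ^2$) yet has nontrivial centre, whereas a non-cyclic free group has trivial centre.
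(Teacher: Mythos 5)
Your proof is correct and takes essentially the same route as the paper, which derives the corollary immediately from the preceding lemma: with $B=\emptyset$ the kernel is $G_\Delta$, $|V_\Delta|=\sum_{a\in L}k_a$, and freeness of $G_L$ forces $\Delta$ to be edgeless, so $G_\Delta$ is free of that rank. The one point you spell out beyond the paper's ``immediately'' --- that an edge in $\Gamma_L$ would give an even dihedral Artin subgroup containing $\bZ^2$ (or, equivalently, a non-cyclic subgroup with nontrivial centre) inside the free group $G_L$ --- is a correct justification of the step the paper leaves implicit.
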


The following is an important observation.
\begin{lem}
If $P$ is index-parabolic in $G_\Delta$, then $P$ is parabolic in $G_\Gamma$.
\end{lem}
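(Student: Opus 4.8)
The plan is to read the conclusion off directly from the explicit isomorphism $\phi\colon G_\Delta\to K=\ker\rho$ of the previous lemma, namely $\phi(v_n)=x^nvx^{-n}$, by tracking where a standard parabolic of $G_\Delta$ supported on vertices of a single index is sent. So first I would unwind the hypothesis: if $P$ is index-parabolic then $P=gG_Sg^{-1}$ for some $g\in G_\Delta$, some $n\in\bZ$ and some $S\subseteq i^{-1}(n)$. Every vertex of $\Delta$ of index $n$ is of the form $a_n$ with $a\in L\cup B$ (and, when $a\in L$, $0\le n\le k_a-1$); hence $S=\{a_n : a\in S_0\}$ for a uniquely determined subset $S_0\subseteq V$.

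Since $\phi$ is an isomorphism of groups and the standard parabolic $G_S$ of $G_\Delta$ is generated by $\{a_n : a\in S_0\}$, its image $\phi(G_S)$ is generated inside $G_\Gamma$ by $\{\phi(a_n)=x^nax^{-n} : a\in S_0\}$. The point on which everything hinges is that all of these generators share the same exponent $n$, so that
$$\phi(G_S)=\langle\, x^nax^{-n} : a\in S_0\,\rangle = x^n\langle\, a : a\in S_0\,\rangle x^{-n}=x^nG_{S_0}x^{-n},$$
a conjugate in $G_\Gamma$ of the standard parabolic subgroup $G_{S_0}$. Consequently, identifying $P$ with its image in $G_\Gamma$ under $\phi$, we get
$$P=\phi(g)\,\phi(G_S)\,\phi(g)^{-1}=\bigl(\phi(g)x^n\bigr)G_{S_0}\bigl(\phi(g)x^n\bigr)^{-1},$$
which is the conjugate of the standard $\Gamma$-parabolic $G_{S_0}$ by the element $\phi(g)x^n\in G_\Gamma$; hence $P$ is $\Gamma$-parabolic.

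There is no serious obstacle here: the argument is bookkeeping with the formula for $\phi$. The only point requiring care is precisely the use of the common index $n$ — were $S$ to contain vertices of two distinct indices, the generators of $\phi(G_S)$ would be conjugates $x^{n}ax^{-n}$ and $x^{m}bx^{-m}$ with $n\neq m$, with no single conjugator to factor out, and $\phi(G_S)$ would no longer be visibly parabolic. This is exactly why the statement is about index-parabolic, rather than arbitrary parabolic, subgroups of $G_\Delta$.
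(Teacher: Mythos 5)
Your argument is correct and is exactly the intended one: the paper states this lemma as an ``important observation'' with no proof, and the verification is precisely the bookkeeping you carry out, namely that the isomorphism $v_n\mapsto x^nvx^{-n}$ sends a standard parabolic $G_S$ of $G_\Delta$ supported on a single index $n$ to $x^nG_{S_0}x^{-n}$ inside $G_\Gamma$, whence any conjugate of it is $\Gamma$-parabolic. Your closing remark about why a common index is essential is also the right explanation for the hypothesis.
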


\subsection{Kernel of a retraction onto the complement of a vertex}
\label{subsec: kernel co-vertex}
Let $z\in V$ and $\rho\coloneqq \rho_{V-\{z\}}\colon G_{\Gamma} \to G_{V-\{z\}}$ the associated retraction.
In \cite{BMP} it is shown that  $K=\ker \rho$ is a free group and they give an explicit description of the basis. 
We shall now recall the construction in the specific case where $m_{u,z}=2$ for all $u\in L=\lk(z)$, as this simplifies our description.

Following \cite{BMP}, a set $\cN_L$ of normal forms for elements in $G_L$ is described.
There first a subset $L_1=\{u\in L: m_{u,z}=2\}$ of $L$ is defined. 
Note that in our situation $L=L_1$.
Then a normal form $\cN_1$ for $G_{L_1}$ is fixed.
The set $\cN_L$ is  defined in this case as $\cN_1$ and following the notation of \cite[Paragraph before Lemma 3.6]{BMP} in this case, one has that $T_0^*$ is the empty set, $T_0=\{1\}$, and $T= \ker \rho_L$ where $\rho_L \colon G_{V-\{z\}}\to G_L$ is the canonical retraction. 

Now $\{z\}\times T$ is a free basis of $\ker{\rho}$ (See \cite[Proposition 3.16]{BMP}) and we can identify $z_t\coloneqq (z,t)$ with $tzt^{-1}$.

\section{Intersection of parabolics}
\label{sec: intersection}
The next lemma essentially proves that the intersection of parabolic subgroups on Artin groups based on graphs with two vertices is parabolic.
It exemplifies some of the ideas used in the theorem of this section.
\begin{lem}\label{lem: intersections on two gen}
Let $\Gamma=(V=\{a,x\}, E = \{a,x\}, m)$ be an Artin graph with $m_{a,x}=2k$ for some $k\geq 1$.
Let $g\in G_\Gamma$. 
Then $\langle a \rangle \cap g \langle a \rangle g^{-1}$ is either equal to $\langle a \rangle$ or trivial.
\end{lem}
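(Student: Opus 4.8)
The plan is to use the description of $\ker\rho_x$ from Section~\ref{sec: kernels}. Since $\Gamma$ has only the two vertices $a,x$, we have $\st(x)=V$, so hypothesis (a) of Subsection~\ref{subsec: kernel vertex} holds. Here $B=\emptyset$, $L=\lk(x)=\{a\}$ and $k_a=m_{a,x}/2=k$, so the associated graph $\Delta$ has vertex set $\{a_0,\dots,a_{k-1}\}$ (all of type $a$) and no edges; hence $K\coloneqq\ker\rho_x$ is a free group of rank $k$ with free basis $a_i=x^iax^{-i}$, and in particular $a=a_0$ is a free generator of $K$. The first step is to observe that both $\langle a\rangle$ and $g\langle a\rangle g^{-1}=\langle gag^{-1}\rangle$ are contained in $K$: indeed $\rho_x(a)=1$, and $\rho_x(gag^{-1})=\rho_x(g)\,\rho_x(a)\,\rho_x(g)^{-1}=1$. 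Therefore $\langle a\rangle\cap g\langle a\rangle g^{-1}$ may be computed inside the free group $K$.

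Next I would record, via the retraction $\rho_a\colon G_\Gamma\to\langle a\rangle\cong\bZ$, that $a$ is nontrivial and not a proper power in $G_\Gamma$ (if $a=w^m$, then $1=m\,\rho_a(w)$ in $\bZ$, forcing $|m|=1$), and hence the same is true of its conjugate $gag^{-1}$. Since $K\leqslant G_\Gamma$, these statements persist for $a$ and $gag^{-1}$ viewed as elements of the free group $K$.

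Now assume $\langle a\rangle\cap g\langle a\rangle g^{-1}$ is nontrivial and pick a nontrivial element $c$ in it; I must show the intersection equals $\langle a\rangle$. Then $c$ is a nonzero power of $a$ and a nonzero power of $gag^{-1}$, so both $a$ and $gag^{-1}$ commute with $c$ and thus lie in the centralizer $C_K(c)$, which is infinite cyclic (as $K$ is free and $c\neq 1$), say $C_K(c)=\langle w\rangle$. Writing $a=w^i$ and $gag^{-1}=w^j$ with $i,j\neq 0$, the fact that $a$ is not a proper power forces $|i|=1$, so $\langle w\rangle=\langle a\rangle$ and $gag^{-1}=a^{\pm j}$; the fact that $gag^{-1}$ is not a proper power then forces $|j|=1$, hence $gag^{-1}=a^{\pm1}$. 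Finally $gag^{-1}=a^{-1}$ is impossible, since applying $\rho_a$ (whose target is abelian) would give $1=-1$ in $\bZ$; therefore $gag^{-1}=a$, so $g\langle a\rangle g^{-1}=\langle a\rangle$ and the intersection equals $\langle a\rangle$. The only real content is the identification of $\ker\rho_x$ as a free group containing both cyclic subgroups; after that, everything reduces to elementary free-group facts (centralizers are cyclic, conjugates of non-powers are non-powers), and the sole care needed is the sign bookkeeping on the exponents.
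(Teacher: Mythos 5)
Your proof is correct, and it diverges from the paper's after the shared opening move. Both arguments begin by noting that $\langle a\rangle$ and $g\langle a\rangle g^{-1}$ lie in $K=\ker\rho_x$, which by Subsection \ref{subsec: kernel vertex} is free of rank $k$ with basis $a_i=x^iax^{-i}$. From there the paper works in coordinates: it writes $g=hx^s$ with $h\in K$, applies Equation \eqref{eq: conjugates of a by x} to get $x^sax^{-s}=\sigma_a^{l(s)}a_r\sigma_a^{-l(s)}$, rewrites the intersection as $\langle a_0\rangle\cap h\sigma_a^{l(s)}\langle a_r\rangle\sigma_a^{-l(s)}h^{-1}$, and finishes with malnormality of the free factor $\langle a_0\rangle$; as a byproduct this pinpoints exactly when the intersection is nontrivial, namely $r=0$ and $h\sigma_a^{l(s)}\in\langle a_0\rangle$. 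You instead use only that $K$ is free and that $a$ (hence $gag^{-1}$) is not a proper power --- obtained from $\rho_a$ and correctly transferred from $G_\Gamma$ down to $K$ --- so that cyclicity of centralizers in free groups forces $gag^{-1}=a^{\pm1}$ whenever the intersection is nontrivial, with the sign eliminated by $\rho_a$ again. Your route needs neither the conjugation formula nor the identification of $a$ with a basis element (freeness of the kernel suffices), which makes it slightly more robust; what it gives up is the explicit criterion for nontriviality, which the paper does not use elsewhere. All steps check out.
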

\begin{proof}
Let $\rho_x\colon G_\Gamma \to \langle x \rangle$.
Both $\langle a \rangle$ and $g\langle a \rangle g^{-1}$ lie on $\ker \rho_x$.
From subsection \ref{subsec: kernel vertex} we know that $\ker \rho_x$ is free with basis $a_0, a_1,\dots, a_{k-1}$ where $a_i=x^iax^{-i}$.
Write $g = h x^s$ where $s=\rho_x(g)$ and $h\in \ker \rho_x$.
Following Equation \ref{eq: conjugates of a by x}, we have that $x^sax^{-s} = \sigma_a^{l(s)}a_r \sigma_a^{-l(s)}$ for some $l(s)\in \bZ$, $0\leq r <k$ and $\sigma_a= a_0a_1a_2\cdots a_{k-1}$.
In particular, $$\langle a \rangle \cap g\langle a \rangle g^{-1}= \langle a_0 \rangle \cap h \sigma_a^{l(s)}\langle a_r \rangle \sigma_a^{-l(s)}h^{-1}.$$
Now, the intersection is trivial if $r\neq 0$. 
If $r=0$, as $\langle a_0\rangle$ is a malnormal subgroup (even more a free factor) of $\ker \rho_x$, 
the intersection is trivial unless $h\sigma_a^{l(s)}\in \langle a_0\rangle$,
 and in that case the intersection is the whole $\langle a_0\rangle = \langle a \rangle$. 
\end{proof}

The following theorem says that the class of even FC-type Artin graphs satisfies the condition of Equation \eqref{eq: base of induction}.
\begin{thm}\label{thm: FC implies class C}
Let $\Gamma=(V,E,m)$ be an even FC-type finite Artin graph.
Let $A\subseteq V$, such that for all $x\in V- A$, $V = \st (x)$.
Let $g\in G$.
Then either $G_A=gG_Ag^{-1}$ or there is $B\subsetneq A$ such that $G_A\cap gG_Ag^{-1}\leqslant G_B$.
\end{thm}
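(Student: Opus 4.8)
The plan is to exploit the product decomposition that the star condition forces. Since $\st(x)=V$ for every $x\in V-A$, every vertex outside $A$ is linked to every other vertex of $V$; hence $V-A$ spans a complete subgraph, and moreover there are no edges missing between $V-A$ and $A$. Therefore $G_\Gamma$ splits as a direct product $G_\Gamma = G_A \times G_{V-A}$ provided one checks that the labels between $A$ and $V-A$ are all $2$. That is exactly where the FC-hypothesis enters: if some $v\in V-A$ and $a\in A$ had $m_{v,a}>2$, then taking a second vertex $w\in V-A$ (if $|V-A|\geq 2$) the triangle $\{v,w,a\}$ is complete with at most one label equal to $2$, contradicting the FC-condition — unless $k_v := m_{v,a}/2 > 1$ is forced down, i.e.\ the FC-condition gives $m_{v,w}=2$ and then among $m_{v,a},m_{w,a}$ at least one is $2$. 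So in general $G_\Gamma$ need \emph{not} be a direct product, and the honest route is an induction on $|V-A|$ peeling off one vertex $x\in V-A$ at a time.

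So first I would set up the induction on $|V-A|$, the base case $V=A$ being trivial ($G_A=G_\Gamma=gG_Ag^{-1}$). For the inductive step, fix $x\in V-A$ and use the retraction $\rho := \rho_{\{x\}}\colon G_\Gamma\to\langle x\rangle\cong\bZ$, whose kernel $K$, by Section \ref{subsec: kernel vertex} under hypothesis (a), is isomorphic to an even FC-type Artin group $G_\Delta$ via $v_n\mapsto x^nvx^{-n}$. Since $A\subseteq V-\{x\}$ maps to $0$ under $\rho$, we have $G_A\leqslant K$ and also $g G_A g^{-1}$ need not lie in $K$; but writing $g=hx^s$ with $h\in K$, the element $x^s$ \emph{normalizes} $K$ and $x^sG_Ax^{-s}$ is, under the isomorphism $K\cong G_\Delta$, the standard parabolic $G_{A'}$ where $A'$ is $A$ re-indexed by the shift (each $a\in A$ with $a\notin\lk(x)$ is sent to $a_s$, and each $a\in A\cap\lk(x)$, which has $k_a=1$ because $\st(x)=V$ and the FC-condition forces $m_{a,x}=2$ unless... here I need to be careful, but the point is $a_s$ reduces into the generating set $\{a_0,\dots,a_{k_a-1}\}$ via Equation \eqref{eq: conjugates of a by x}). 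Thus inside $G_\Delta$ we are looking at $G_{\bar A}\cap h'G_{\bar A'}{h'}^{-1}$ for suitable subsets of $V_\Delta$ and $h'\in G_\Delta$; both are index-parabolic.

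The key reduction is then: $\bar A$ and $\bar A'$ live in a proper parabolic of $G_\Delta$ — namely, $\bar A$ uses only index-$0$ (and index-$s$) vertices of types in $A$, and in the graph $\Delta$ the vertex $x$ has disappeared, so $\Delta$ has strictly fewer "vertex types outside $\bar A$ whose star is everything"? This is the delicate point. I would instead observe that we may directly apply Proposition \ref{prop: reduction to stars}/Lemma \ref{lem: link exterior} machinery inside $G_\Delta$: if $\bar A\cup h'\bar A'$ sits in a proper parabolic, reduce the graph; otherwise use that in $\Delta$ there is a vertex outside $\bar A$ not linked to all of $\bar A$ (a vertex $v_n$, $v_m$ of the same type $v\in B$, are non-adjacent), invoke Lemma \ref{lem: link exterior} to land in a parabolic over a proper subset, and push through the $|A|$-induction of Proposition \ref{prop: reduction to stars}. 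Concretely, since $\ker\rho\cong G_\Delta$ is again even FC-type and the class is closed under subgraphs, and since we are proving precisely the statement \eqref{eq: base of induction} that feeds Proposition \ref{prop: reduction to stars}, the cleanest organization is a \emph{simultaneous} induction: intersections in all even FC-type Artin groups on $\leq n-1$ vertices are parabolic $\Rightarrow$ \eqref{eq: base of induction} holds for graphs on $n$ vertices. Under that hypothesis, $G_A\cap x^sG_Ax^{-s}\cap h$-conjugates becomes an intersection of parabolics inside the smaller group $G_\Delta\cong K$, hence parabolic in $K$, hence (being index-parabolic, if we arrange it so) parabolic in $G_\Gamma$ by the last Lemma of Section \ref{subsec: kernel vertex}; finally either this intersection is all of $G_A$ or, by Lemma \ref{lem: proper_inclusions_parabolics} and Lemma \ref{lem: containing implies equality}, it is contained in a $G_B$ with $B\subsetneq A$, as desired.

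**Main obstacle.** The hard part is bookkeeping the index-parabolic condition: after conjugating $G_A$ by $x^s$ and rewriting via Equation \eqref{eq: conjugates of a by x}, the image in $G_\Delta$ of $g G_A g^{-1}$ is a conjugate of an index-parabolic, but the \emph{intersection} one obtains from the smaller-case hypothesis is only guaranteed parabolic in $G_\Delta$, not index-parabolic, so one cannot immediately quote "index-parabolic in $G_\Delta$ $\Rightarrow$ parabolic in $G_\Gamma$." Resolving this requires either strengthening that transfer lemma, or arguing that the intersection of two index-parabolics over the \emph{same} index $0$ is again index-parabolic (which should follow because everything is already inside the sub-Artin-group $G_{\Delta_0}$ on the index-$0$ vertices, a copy of $G_{V-\{x\}}$-type data), and then reducing to the co-dimension-one retraction $\rho_{V-\{x\}}$ of Section \ref{subsec: kernel co-vertex}. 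Threading these two retractions together — $\rho_{\{x\}}$ to shrink the group and $\rho_{V-\{x\}}$ to control conjugators — while keeping the parabolic-over-a-proper-subset conclusion is the technical heart of the proof.
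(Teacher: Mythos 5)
Your setup is on the right track: the base case where all labels between $A$ and $V-A$ equal $2$ gives a direct product, and the inductive step does pass to $\ker\rho_{\{x\}}\cong G_\Delta$ after writing $g=hx^s$. But there is a genuine gap at the central step. You assert that $x^sG_Ax^{-s}$ is, under the isomorphism $K\cong G_\Delta$, ``the standard parabolic $G_{A'}$ where $A'$ is $A$ re-indexed by the shift.'' This is false precisely in the interesting case. Note first that $\st(x)=V$ does \emph{not} force $m_{a,x}=2$ for $a\in A$ (the FC-condition only constrains triangles, and the whole content of the theorem lies in the presence of labels $m_{a,x}=2k_a$ with $k_a>1$). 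For such $a$, Equation \eqref{eq: conjugates of a by x} gives $x^sax^{-s}=\sigma_a^{\,l}a_r\sigma_a^{-l}$ with $\sigma_a=a_0a_1\cdots a_{k_a-1}$, and the conjugating elements $\sigma_b^{\,l(s,b)}$ are \emph{different for different $b\in A$}. Hence the subgroup $Q=x^sG_Ax^{-s}$ of $G_\Delta$ is generated by conjugates of the $b_0$'s by unrelated elements and is in general \emph{not} a conjugate of any standard parabolic of $G_\Delta$ (the paper states this explicitly). Your argument then has nothing to intersect: you cannot invoke the inductive hypothesis about intersections of parabolics, nor the transfer lemma for index-parabolics, because $hQh^{-1}$ is not parabolic. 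The actual proof spends its entire Case 3 on exactly this point: it normalizes one generator via the automorphism $\phi$ sending $a_1\mapsto\sigma_a$, passes to the free kernel of $\rho_{V_\Delta-\{a_0\}}$, identifies explicit free bases $T_{A_0}$ and $T_P$ for the relevant normal closures (Claims 1 and 2), and in the residual subcase descends to yet another kernel $G_\Lambda$ of $\rho_{a_1}$ to exhibit a missing generator. None of this is anticipated by your ``arrange it so that the intersection is index-parabolic'' remark; the obstacle you flag (transferring parabolicity from $G_\Delta$ back to $G_\Gamma$) is the easy part, handled by the one-line lemma on index-parabolics.

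A secondary problem is your induction variable. Inducting on $|V-A|$, or running a simultaneous induction on the number of vertices of the graph, does not obviously terminate: $V_\Delta$ has $\sum_{u\in\lk(x)}k_u$ vertices, which can exceed $|V|$, so the auxiliary group $G_\Delta$ need not be ``smaller'' in your sense. The paper instead inducts on the number $N$ of edges between $A$ and $V-A$ with label $>2$, a quantity that demonstrably decreases both in the splitting-off case $A\subseteq\st(a)$ (where $G_\Gamma=G_{\{x,a\}}\times G_Z$ and Lemma \ref{lem: intersections on two gen} handles the two-generator factor) and in the reduction to a subgraph $\Delta'$ of $\Delta$ when $s=0$. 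You would need to replace your induction by something of this kind for the recursion to close.
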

\begin{proof}
If $A$ is empty, then $G_A=\{1\}$ and $G_A=gG_Ag^{-1}$. So we assume that $A$ is non-empty.

Let $N$ be the number of edges from $A$ to $V- A$ with label $>2$. We will argue by induction on $N$.

If $N=0$, then for all $x\in V- A$ and all $a\in A$, the label of $\{x,a\}$ is $2$, and hence $G = G_{V- A}\times G_A$ and $gG_Ag^{-1}=G_A$ for all $g\in G$.

So assume that $N>0$ and the theorem holds for smaller values of $N$.

Consider first the case where, there is $x\in V-A$ and $a\in A$ such that the label of $\{x,a\}$ is $2k_a$ with $k_a >1$ and  $A\subseteq \st(a)$. 
In this case $\st(x) = \st(a) = V$, and for all $z\in Z \coloneqq  V-\{a,x\}$
we have that $m_{x,z}=m_{a,z}=2$, which yields $G_\Gamma= G_{\{x,a\}}\times G_Z$.
Write $g=(g_1,g_2)$ with $g_1\in G_{\{x,a\}}$ and $g_2\in G_Z$.
Then $G_A\cap gG_Ag^{-1}$ is 
equal to  
the direct product of the subgroup $\langle a \rangle \cap g_1\langle a\rangle g_1^{-1}$ of the direct factor $G_{\{x,a\}}$ and the subgroup $G_{A-\{a\}}\cap g_2 G_{A-\{a\}}g_2^{-1}$ of the direct factor $G_Z$.
By Lemma \ref{lem: intersections on two gen} we have that $\langle a \rangle \cap g_1\langle a\rangle g_1^{-1}$ is either trivial or equal to $\langle a \rangle$.
Let $A'= A-\{a\}$.
Note that for all $z\in Z-A'$, $\st(z)=Z$ and the number of vertices $z\in Z-A'$ with an edge with label $>2$ is less than $N$ (as $Z$ spans a subgraph of $\Gamma$ that consists of deleting the vertices $x,a$ and the edge $\{x,a\}$).
Therefore by induction, either $G_{A'}=g_2 G_{A'}g_2^{-1}$ or $G_{A'}\cap g_2 G_{A'}g_2^{-1}$ is contained on a parabolic subgroup over a proper subset of $A'$.
The theorem  follows in this case.

So let us consider the case where  there is $x\in  V- A$ and $a\in A$ such that the label of $\{x, a\}$ is $2k_a$ with $k_a >1$ (in particular $N\geq 1$), $ A\not\subseteq \st(a)$ and that the theorem holds for smaller values of $N$.
We remark that the condition $A\not\subseteq \st(a)$ will not be used until Case 3.2 below.

Recall from the previous section, that there exists a finite Artin graph $\Delta$, such that $\ker \rho_{x}$ is isomorphic to $G_\Delta$.
We follow the notation of Subsection \ref{subsec: kernel vertex}.
We have that $V_\Delta = \{z_0,\dots, z_{k_z}-1 : z\in V-\{x\}\}$, $E_\Delta = \{\{u_i, v_j\}\subseteq V_\Delta: u_i\neq v_j \text{ and } \{u,v\}\in E\},$ and $m^{\Delta}_{u_i,v_j}=m_{u,v}$.
Recall that the vertices of $V_\Delta$ are indexed.
We will write $A_0$ to denote the vertices of type $v\in A$ and index $0$, i.e.  $A_0=\{b_0: b\in A\}$.
Observe that the vertices $y_i$ of $V_\Delta-A_0$ such that $\lk(y)$ does not contain $A_0$ are exactly the vertices 
$b_1,\dots, b_{k_b-1}$ with $b\in A$ and $k_b>1$. 
Indeed, if $k_b>1$, $b_0,\dots, b_{k_b-1}$ span a subgraph with no edges of $\Delta$ and thus $b_0\notin\lk(b_i)$ for $i>0$. On the other hand, if $y\in V-A$, as $\st(y)=V$, we have that $k_y=1$ (since $y,x,a$ form a triangle) and then in $\Delta$ we only have a vertex $y_0$ of type $y$ and by definition of $\Delta$, we  have that $\st(y_0)=V_\Delta$.

 We note that $G_{A_0}$ is an index-parabolic subgroup of $G_\Delta$ and it is equal to the  subgroup $G_A$ of $G_\Gamma$.

Write $g= hx^s$ where $h\in \ker \rho_{x}$ and $s=\rho_{x}(g)$.
Let $Q=x^s G_A x^{-s}\leqslant \ker \rho_x$. 
We note that $Q$ might not be a parabolic subgroup of $\ker \rho_x$ although we can give a very precise description using Equation \eqref{eq: conjugates of a by x}:
 $Q$ is generated by $\{ x^s b x^{-s} :b\in A \}$. If
$k_b=1$, then $x^s b x^{-s} = b_0$. 
If $k_b>1$ then  $x^s b x^{-s}$ is equal to $\sigma_b^{l(s,b)} b_i \sigma_b^{-l(s,b)}$ where $i\in \{0,\dots, k_b-1\}$, $i\equiv s \mod k_b$, $l(s,b)\in \bZ$ and $\sigma_b=b_0b_1 \dots b_{k_b-1}$.

Now $G_A\cap gG_Ag^{-1} = G_{A_0} \cap hQh^{-1}$. Note that even if $Q$ is not parabolic, 
we are reduced to show that either $G_{A_0} = hQh^{-1}$ or that $G_{A_0} \cap hQh^{-1}$ is contained on a $\Delta$-parabolic subgroup over a proper subset of $A_0$. Indeed, in the latter case, as $\Delta$-parabolics over subsets of $A_0$ are $\Gamma$-parabolics, we also get that $G_A\cap gG_Ag^{-1}$ is contained on a $\Gamma$-parabolic subgroup over a proper subset of $A$.

We consider three cases:

{\bf Case 1: $s=0$}.  Then $Q=G_{A_0}$ is a parabolic subgroup of $G_\Delta$. By Lemma \ref{lem: reducing subgraphs} we can reduce the problem to a subgraph $\Delta'$ of $\Delta$ and $h'\in G_{\Delta'}$  such that $G_{A_0}\cup h'G_{A_0}(h')^{-1}$ is not contained in a proper parabolic subgroup of $G_{\Delta'}$. We need to show that either $G_{A_0}=h'G_{A_0}(h')^{-1}$ or $G_{A_0}\cap h'G_{A_0}(h')^{-1}$ is contained in $\Delta'$ parabolic subgroup over a proper subset of $A_0$. 

If  $b_{i}\in V_{\Delta'}$ for some $b\in A$, $k_b>1$ and $i>0$, then Lemma \ref{lem: link exterior} implies that  $G_{A_0}\cap h'G_{A_0}(h')^{-1}$ is contained in a $\Delta'$-parabolic subgroup over a proper subset of $A_0$ and we are done.
So, we can assume that $V_{\Delta'}\subseteq A_0 \cup \{y_0: y\in V-A\}$.
Note that in this case, $\Delta'$ is a finite Artin graph of even FC-type, 
for all $y_0\in V_{\Delta'}-A_0$ we have that $\st(y_0)=V_{\Delta'}$ and
the number of edges from $V\Delta'- A_0$ to $A_0$ with label $>2$ is less than $N$ (in fact, it is less than $N$ minus the number of edges $\{x,b\}$ with label $> 2$).
By induction, we get that either $ G_{A_0} = {h'}Q(h')^{-1}$ or  $G_{A_0} \cap {h'}Q(h')^{-1}$ is contained in a $\Delta'$-parabolic subgroup over a proper subset of $A_0$ and we are done.

{\bf Case 2:  $s\notin k_a\bZ$.} Then $\rho_{A_0}(hQh^{-1})\leqslant G_{A_0- \{a_0\}}$ and therefore $G_{A_0}\cap hG_Qh^{-1} \leqslant G_{A_0- \{a_0\}}$ and the lemma holds.

{\bf Case 3:  $s\in k_a\bZ$, $s\neq 0$.}
Recall that $Q$ is generated by $\{ x^s b x^{-s} :b\in A \}$, and the element $x^s bx^{-s}$ is equal (in $G_\Delta$) to
$\sigma_b^{l(s,b)} b_{i(s,b)} \sigma_b^{-l(s,b)}$ where $ i(s,b) \equiv s \mod  k_b$,  $l(s,b)\in \bZ$  and $\sigma_b = b_0b_1\dots b_{k_b-1}$(see  Equation\eqref{eq: conjugates of a by x}).
If some $i(s,b)\neq 0$, we lie in Case 2. So we assume that $i(s,b)=0$ for all $b\in A$.
 
For simplifying our notation and arguments\footnote{Below a standard parabolic subgroup $G_D$ is defined, and an advantage of using $\phi$ is that $\sigma_a\notin G_D$ but $a_1$ is in $ G_D$ .}, consider the automorphism 
$$\phi\colon G_{\Delta}\to G_{\Delta}\qquad  \phi(v)=\begin{cases}v & v\neq a_1\\
\sigma_a & v=a_1.\end{cases} $$ 
We need to show that this is well defined.
By construction, the only edges of $\Delta$ adjacent to $a_1$ are of the form $\{z_0,a_1\}$ with $z\in \lk_\Gamma(a)-\{x\}$.
Moreover, as $\st(x)=V$ and $k_a>1$, necessarily  $m_{z_0,a_1}=2$ for $z\in \lk_\Gamma(a)-\{x\}$.
Thus, we need to check that $\sigma_a$ commutes with $z_0$, $z\in \lk_\Gamma(a)-\{x\}$.
But this holds, as by construction $m_{z_0,a_i}=2$ for all $i=0,1,\dots, k_a-1$ (recall that $\sigma_a=a_0a_1\cdot a_{k_a-1}$).
Thus $\phi$ is well-defined. It is easy to check that $\phi$ is bijective.

We apply $\phi^{-1}$ to $G_{A_0}$, $Q$ and $h$, and we get $G_{A_0}$, 
\begin{equation}\label{eq: generators of x^sGA_0x^-s}
P=\langle 
\{a_1^{l(s,a)}a_0 a_1^{-l(s,a)}\}
\cup 
\{\sigma_b^{l(s,b)}  b_0 \sigma_b^{-l(s,b)}: b_0\in A_0-\{a_0\}\}\rangle
\end{equation}
and $f=\phi^{-1}(h)$ respectively.
Note that as $G_{A_0}$ is fixed by $\phi$, 
we have that $G_{A_0}=hQh^{-1}$ if and only  if $G_{A_0}=fP f^{-1}$ and that $G_{A_0}\cap hQh^{-1}$ is contained in a $\Delta$-parabolic subgroup over a proper subset of $A_0$ if and only if the same holds for $G_{A_0}\cap fPf^{-1}$.
For simplicity, we set $l=l(s,a)$.
Note that as $s\neq 0$, $l\neq 0$.

Let $D= V_\Delta-\{a_0\}$ and $\rho_D$ the corresponding retraction.
Then $G_\Delta = \ker \rho_D \rtimes G_D$.
Let $\rho_{a_1}\colon G_\Delta\to \langle a_1\rangle$ be the canonical retraction.
We have now two subcases.

{\bf Case 3.1: $\rho_{a_1}(fa_1^l) \neq 0$}.

We are going to show that 
$$(G_{A_0}\cap \ker \rho_D)\cap (fPf^{-1}\cap \ker \rho_D)=\{1\}.$$
This implies that $G_{A_0}\cap fPf^{-1}\leqslant G_D$ and therefore, $G_{A_0}\cap fPf^{-1}\leqslant G_{A_0\cap D}$ and we are done, as $A_0\cap D = A_0-\{a_0\}$.
Note that 
$$(G_{A_0}\cap \ker \rho_D) =
\langle a_0^{G_{A_0}}\rangle 
\quad\text{  and }\quad
(fPf^{-1}\cap \ker \rho_D)= \langle (fa_1^l a_0 a_1^{-l}f^{-1})^{fPf^{-1}}\rangle.$$

Let $L=\lk_\Delta(a_0)$. As $k_a>1$ every vertex in $L$ commutes with $a_0$. 
Let $T=\ker (\rho_L \colon G_{D}\to G_L)$.
Here $\rho_L$ is the canonical retraction.
Recall from subsection \ref{subsec: kernel co-vertex} that $\ker \rho_D$ is free with free basis $\{ta_0 t^{-1} : t\in T\}$.
Note that if $g\in G_D$, then there are unique  $g_L=\rho_L(g)$ and $g'\in \ker \rho_L$ such that $g=g'g_L$ and $ga_0g^{-1}= g'a_0 g'^{-1}$.

{\bf Claim 1:}  $\langle a_0^{G_{A_0}}\rangle$ is free with basis $T_{A_0}=\{ta_0t^{-1} : t\in \ker \rho_L \cap G_{A_0}\}$.

Notice that $\langle T_{A_0}\rangle \leqslant \langle a_0^{G_{A_0}}\rangle$. 
So it is enough to show that $\langle a_0^{G_{A_0}}\rangle \leqslant\langle T_{A_0}\rangle $ to prove Claim 1.
In order to show it, let $g\in G_{A_0}$ and we need to show that $ga_0g^{-1}\in  \langle T_{A_0}\rangle.$
Write $g$ as $g = g_1a_0^{m_1}g_2a_0^{m_2}\dots g_n a_0^{m_n}$ where $n\geq 0$, $g_i\in (G_D\cap G_{A_0})-\{1\}$ for $i=1,2,\dots, n$,  $m_i\in \bZ-\{0\}$ for $i=1,2,\dots, n-1$ and $m_n\in \bZ$.
We can further write $g_i$ as $c_ih_i$ where $h_i\in \ker \rho_D$ and $c_i\in G_L$, that is
$$g = c_1h_1a_0^{m_1}c_2h_2a_0^{m_2}\dots c_n h_n a_0^{m_n},$$
which rewriting $c_1 \cdots c_i h_i c_i^{-1}\cdots c_1$ as $h_i'$ and using that the $c_i$'s commute with $a_0$, we get that
$$g = h_1'a_0^{m_1}h'_2a_0^{m_2}\dots h'_n a_0^{m_n}c_1c_2\cdots c_n,$$
notice that $ga_0g^{-1}$ is equal to $g'a_0(g')^{-1}$ where 
$$g' = h_1'a_0^{m_1}h'_2a_0^{m_2}\dots h'_n.$$
We can write $g'a_0(g')^{-1}$ as a product of elements of  $T_{A_0}=\{ta_0t^{-1} : t\in \ker \rho_L \cap G_{A_0}\}$. Indeed:
\begin{align*}
g'a_0(g')^{-1} & = (h_1'a_0 (h_1')^{-1})^{m_1} \cdot (h_1' h_2' a_0 (h_1'h_2')^{-1})^{m_2} \cdots (h_1'\cdots h_{n-1}' a_0 (h'_{1}\cdots h'_{n-1})^{-1})^{m_{n-1}} \cdot \\
&\cdot h_1'\cdots h_n' a_0 (h_1'\cdots h_n')^{-1} \cdot \\
&\cdot (h_1'\cdots h_{n-1}' a_0 (h'_{1}\cdots h'_{n-1})^{-1})^{-m_{n-1}}\cdots (h_1' h_2' a_0 (h_1'h_2')^{-1})^{-m_2}\cdot(h_1'a_0 (h_1')^{-1})^{-m_1} .
\end{align*}
This completes the proof of Claim 1.

{\bf Claim 2:} $\langle (fa_1^l a_0 a_1^{-l}f^{-1})^{fPf^{-1}}\rangle$ is free with basis $T_P= \{ta_0t^{-1}: t \in f'Pa_1^l\cap \ker \rho_L\}$ where $f'$ is the unique element of $\ker \rho_L$ such that $f=f'\rho_L(f)$.

The proof of the claim is very similar to the previous one. 
One has that $\langle T_P\rangle \leqslant \langle (fa_1^l a_0 a_1^{-l}f^{-1})^{fPf^{-1}}\rangle$
 so it is enough to show that for any $g\in P$ the element 
 $$(fgf^{-1})(fa_1^la_0a_1^{-l}f^{-1})(fgf^{-1})
 =fga_1^la_0a_1^{-l}g^{-1}f^{-1}$$
 lies in $\langle T_P\rangle$.
Recall that from Equation \ref{eq: generators of x^sGA_0x^-s} that a generating set of  $P$ is 
$$\{a_1^la_0a_1^{-l}\}\cup \{\sigma_b^{l(s,b)}b_0\sigma_b^{-l(s,b)}: b\in A_0-\{a_0\}\}.$$
 In a similar way as before, we can write $g$ as 
 $$g=  c_1h_1(a_1^la_0a_1^{-l})^{m_1}c_2h_2(a_1^la_0a_1^{-l})^{m_2}\dots c_n h_n (a_1^la_0a_1^{-1})^{m_n}$$
  where $n\geq 0$, $c_i\in P\cap G_L$ and  $h_i\in P\cap \ker\rho_L$ for $i=1,\dots n$. 
Let $f_L=\rho_L(f)$.
Rewriting $f_Lc_1 \cdots c_i h_i c_i^{-1}\cdots c_1^{-1} f_L^{-1}$ as $h_i'$ and using that the $c_i$'s and $f_L$ commute with $a_0, a_1$, we get that 
 $$fg =f' h_1'(a_1^la_0a_1^{-l})^{m_1}h'_2(a_1^la_0a_1^{-l})^{m_2}\dots h'_n (a_1^la_0a_1^{-l})^{m_n}f_Lc_1c_2\cdots c_n.$$
Now notice that
$$fga_1^{l}a_0 a_1^{-l}g^{-1}f^{-1} = f'g'a_1^{l}a_0 (f'g'a_1^{l})^{-1}$$
where 
$$ g'=  h_1'(a_1^la_0a_1^{-l})^{m_1}h'_2(a_1^la_0a_1^{-l})^{m_2}\dots h'_n.$$

Now we can write $f'g'a_1^{l}a_0 (f'g'a_1^{l})^{-1}$ as a product of elements of  $T_{P}=\{ta_0t^{-1} : t\in \ker \rho_L \cap (f'Pa_1l)\}$.
\begin{align*}
f'g'a_1^{l}a_0 (f'g'a_1^{l})^{-1} & = (f'h_1'a_1^la_0 (f'h_1'a_1^l)^{-1})^{m_1} \cdot ((f'h_1' h_2'a_1^l) a_0 (f'h_1'h_2'a_1^l)^{-1})^{m_2} \cdot \\&\cdots ((f'h_1'\cdots h_{n-1}'a_1^l) a_0 (f'h'_{1}\cdots h'_{n-1}a_1')^{-1})^{m_{n-1}} \cdot \\
&\cdot (f'h_1'\cdots h_n'a_1^l) a_0 (f'h_1'\cdots h_n'a_1^l)^{-1} \cdot \\
&\cdot ((f'h_1'\cdots h_{n-1}'a_1^l) a_0 (f'h'_{1}\cdots h'_{n-1}a_1^l)^{-1})^{-m_{n-1}}\cdot \\ 
&\cdots ((f'h_1' h_2'a_1^l) a_0 (f'h_1'h_2'a_1^l)^{-1})^{-m_2}\cdot((f'h_1'a_1^l)a_0 (f'h_1'a_1^l)^{-1})^{-m_1} .
\end{align*}
This completes the proof of Claim 2.

Now, if $\rho_{a_1}(fa_1^l)\neq 0$, then $T_{A_0}\cap T_P=\emptyset$ and both are subsets  of a free basis of $\ker\rho_D$. Therefore $\langle T_{A_0}\rangle \cap \langle T_P\rangle = \{1\}$.

{\bf Case 3.2: $\rho_{a_1}(fa_1^l) = 0$.}
Note that $G_{A_0}\leqslant \ker \rho_{a_1}$ and $fPf^{-1}\leqslant \ker \rho_{a_1}$.
 As every  $z\in \lk(a_1)$ commutes with $a_1$, we are in case (b) of subsection \ref{subsec: kernel vertex} and $\ker \rho_{a_1}$ is isomorphic to $G_\Lambda$ where $\Lambda$ is an even, FC-type, Artin graph (possibly infinite).
Recall that 
$$V_\Lambda = \{b_{i,0}: b_i\in \lk_\Delta(a_1)\} \cup \{z_{i,j}: z_i\in V_\Delta-\st_\Delta(a_1), j\in \bZ\}$$ and there is an edge $\{v_{i,j}, u_{s,t}\}$ in $\Lambda$ if and only if there is and edge $\{v_i, u_s\}$ in $\Delta$ and the label of both edges is the same.

Let $A_{0,0}=\{b_{0,0}: b_0\in A_0\}$ the vertices of $\Lambda$ of level $0$ and type $A_0$.
 Note that $G_{A_{0,0}}\leqslant G_\Lambda$ is the subgroup $G_{A_0}$ of $G_\Delta$ and the subgroup $G_{A}$ of $G_\Gamma$.

As $\rho_{a_1}(fa_1^l) = 0$, we have that $\rho_{a_1}(f)\neq 0$ (recall that $l\neq 0$).
Write $f$ as $f'a_1^\alpha$, with $\alpha=\rho_{a_1}(f)\in \bZ$.
Consider the canonical retraction $\rho_{A_{0,0}}\colon G_\Lambda\to G_{A_{0,0}}$.
Now, we have that $fPf^{-1}$ is equal to $f' P' (f')^{-1}$
where $P'$ is generated by 
$$\{a_{0,0} \}\cup \{\tau_{c}^{l(s,c)} a_1^\alpha c_{0} a_1^{-\alpha}\tau_{c}^{-l(s,c)} : c\in A-\{a\} \} $$
where $\tau_{c}= a_1^{\alpha}\sigma_{c}a_1^{-\alpha}$ 
is some element of $\langle \{ v_{i,j} : v_{i,j}\text{ of type }c_i\in V_\Delta\} \rangle$.
Moreover, using Equation \ref{eq: conjugates of a by x} in the setting of $\rho_{a_1}$ we have that
 $$a_1^\alpha c_{0} a_1^{-\alpha}= \beta_c^{l(\alpha,c)} c_{0,i(\alpha,c)} \beta_c^{-l(\alpha,c)}$$
 for some word $\beta_c\in \langle \{ v\in V_{\Lambda}: \, $v$\text{ of type  } c_{0}\}\rangle$ and some $i(\alpha,c)\in \bZ$.
Observe that 
$$
\rho_{A_{0,0}}
(\tau_c^{l(s,b)}a_1^{\alpha} c_{0}a_1^{-\alpha}\tau_c^{-l(s,b)})
= 
\begin{cases}
c_{0,0} & \text{if }i_{\alpha,c}=0\\
1 & \text{otherwise}.
\end{cases}
$$

Recall that we are assuming $A\not\subseteq \lk(a)$ and therefore there exists some $b\in A$ such that $b$ is not linked to $a$. 
Thus $b_0$ is not linked to $a_i$, $i=0,1,\dots, k_a-1$ in $\Delta$.
As $b_0$ is not linked to $a_1$, we have that $a_1^\alpha b_{0} a_1^{-\alpha}=b_{0,\alpha}$.
And we get that $\rho_{A_{0,0}}(P') \leqslant G_{A_{0,0}-\{b_{0,0}\}}.$
In particular $G_{A_{0,0}}\cap  f'P'(f')^{-1}\leqslant \rho_{A_{0,0}}(f') G_{A_{0,0}-\{b_{0,0}\}}\rho_{A_{0,0}}(f')^{-1}$.
Note that $G_{A_{0,0}-\{b_{0,0}\}}=G_{A_0-\{b_0\}}$.
So, there  is $d\in G_{\Delta}$ such that 
$\rho_{A_{0,0}}(f') G_{A_{0,0}-\{b_{0,0}\}}\rho_{A_{0,0}}(f')^{-1}=d G_{A_0-\{b_0\}}d^{-1}$,
and thus $G_{A_0}\cap fPf^{-1}$ is contained in $dG_{A_0-\{b_0\}}d^{-1}$, a parabolic over a proper subset of $A_0$.
This completes the proof in this case.
\end{proof}
\begin{proof}[Proof of Theorem \ref{thm: intersections}]
Let $\cC$ be the class of finite, even, FC-type Artin graphs. 
Then $\cC$ is closed under subgraphs and satisfies \eqref{eq: base of induction} by Theorem \ref{thm: FC implies class C}.
The theorem now follows from Proposition \ref{prop: reduction to stars}.
\end{proof}

\begin{cor}\label{cor: arbitrary intersection}
Let $\Gamma = (V,E,m)$ be an even, finite Artin graph of FC-type Then any arbitrary intersection of parabolic subgroups in $G_\Gamma$ is a parabolic subgroup.
\end{cor}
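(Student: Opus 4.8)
The plan is to bootstrap from Theorem \ref{thm: intersections} (intersection of \emph{two} parabolics is parabolic) to finite intersections, and then to identify an arbitrary intersection with a single well-chosen finite subintersection. The two tools that make the last step work are already available: a parabolic subgroup is parabolic over a \emph{unique} subset of $V$ (the Corollary following Lemma \ref{lem: containing implies equality}), and a proper inclusion of parabolics forces a proper inclusion of the underlying subsets (Lemma \ref{lem: proper_inclusions_parabolics}), hence a strict drop in cardinality.

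First I would record that the intersection of finitely many parabolic subgroups of $G_\Gamma$ is again parabolic: this follows from Theorem \ref{thm: intersections} by an immediate induction on the number of factors. (The empty intersection is $G_V$, which is parabolic, so one may harmlessly assume there is at least one factor.)

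Now let $\{P_i\}_{i\in I}$ be an arbitrary nonempty family of parabolic subgroups of $G_\Gamma$, and consider the collection $\mathcal{F}$ of all subgroups of the form $P_{i_1}\cap\cdots\cap P_{i_n}$ with $\{i_1,\dots,i_n\}\subseteq I$ finite and nonempty. By the previous paragraph each $Q\in\mathcal{F}$ is parabolic, say over the uniquely determined subset $S(Q)\subseteq V$. Since $V$ is finite, I may pick $Q=P_{i_1}\cap\cdots\cap P_{i_n}\in\mathcal{F}$ with $|S(Q)|$ minimal among all members of $\mathcal{F}$. I claim $\bigcap_{i\in I}P_i=Q$. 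The inclusion $\bigcap_{i\in I}P_i\subseteq Q$ is immediate. For the reverse, fix $j\in I$: then $Q\cap P_j=P_{i_1}\cap\cdots\cap P_{i_n}\cap P_j\in\mathcal{F}$ and $Q\cap P_j\leqslant Q$. Were this inclusion proper, Lemma \ref{lem: proper_inclusions_parabolics} would give $S(Q\cap P_j)\subsetneq S(Q)$, hence $|S(Q\cap P_j)|<|S(Q)|$, contradicting minimality. Therefore $Q\cap P_j=Q$, i.e. $Q\leqslant P_j$; as $j\in I$ was arbitrary, $Q\leqslant\bigcap_{i\in I}P_i$, so $\bigcap_{i\in I}P_i=Q$ is parabolic.

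I do not anticipate a real obstacle here: the substantive content is Theorem \ref{thm: intersections}, and all that remains is the bookkeeping that every finite subintersection is parabolic (so that $S(\cdot)$ is defined on $\mathcal{F}$) together with the use of the finiteness of $V$ to guarantee that a minimizer of $|S(Q)|$ exists. The only point deserving a word of care is the degenerate case of an empty family, handled by noting $G_\Gamma=G_V$ is itself parabolic.
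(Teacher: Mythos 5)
Your proof is correct and follows essentially the same strategy as the paper: both reduce the arbitrary intersection to a finite subintersection by combining Theorem \ref{thm: intersections} with Lemma \ref{lem: proper_inclusions_parabolics} and the finiteness of $V$. The only (cosmetic) difference is that you select a finite subintersection minimizing $|S(Q)|$ directly, whereas the paper first reduces to a countable index set and then observes that the descending chain $Q_1 \supseteq Q_2 \supseteq \cdots$ must stabilize; your variant has the minor advantage of handling uncountable index sets without that preliminary reduction.
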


\begin{proof}
Let $\cP$ be the set of parabolic subgroups in $G$.
Note that as $\Gamma$ is finite, $\cP$ is countable.
For an arbitrary indexing set $I$, we want to show that:
$$Q = \bigcap_{i\in I, P_i \in \cP} P_i$$
is a parabolic subgroup.
If $I$ is finite, the claim follows from Theorem \ref{thm: intersections} and induction. 
So, we can assume that the indexing set $I$ is countable, and we can index its elements by natural numbers. 
Write:
$$\bigcap_{i\in I} P_i = \bigcap_{n \in \bN} \left( \bigcap_{i \leq n} P_i \right),$$
and set $Q_n = \bigcap_{i \leq n} P_i$. 
We know that $Q_n$ is a parabolic subgroup for any $n$. 
Moreover we have a chain of parabolic subgroups:
$$Q_1 \supseteq Q_2  \supseteq Q_3  \supseteq \cdots $$
where the intersection of all members $Q_i$ of the chain above is equal to $Q$. 
We cannot have an infinite chain of nested distinct parabolic subgroups. Indeed, using Lemma \ref{lem: proper_inclusions_parabolics} we have that $gG_Ag^{-1} \subsetneq hG_Ah^{-1}$ implies $A \subsetneq B$. Hence there are at most $|V| + 1$ distinct parabolic subgroups in the chain above. 

Ultimately, $Q$ is an intersection of at most $|V| + 1$ parabolic subgroups and hence it is a parabolic subgroup.
\end{proof}

\vspace{1cm}

\noindent{\textbf{{Acknowledgments}}} 
The authors are grateful to Mar\'{i}a Cumplido for helpful conversations while working on this project.

 Yago Antol\'{i}n  acknowledges partial support from the Spanish Government through the ``Severo Ochoa Programme for Centres of Excellence in R\&{}D'' CEX2019-000904-S.

Islam Foniqi is a member of INdAM--GNSAGA, and gratefully acknowledges support from the Department of Mathematics of the University of Milano--Bicocca, and the Erasmus Traineeship grant 2020--1--IT02--KA103--078077.

\noindent\textit{\\ Yago Antol\'{i}n,\\
Fac. Matem\'{a}ticas, Universidad Complutense de Madrid and \\ 
Instituto de Ciencias Matem\'aticas, CSIC-UAM-UC3M-UCM\\
Madrid, Spain\\}
{email: yago.anpi@gmail.com}

\noindent\textit{\\ Islam Foniqi,\\
Università degli Studi di Milano - Bicocca\\ 
Milan, Italy\\}
{email: islam.foniqi@unimib.it}

\end{document}